\numberwithin{equation}{section}
\DeclareMathOperator{\li}{li}
\DeclareMathOperator{\ord}{ord}
\newtheorem{thm}{Theorem}[section]
\newtheorem{lem}{Lemma}[section]
\newtheorem{dfn}{Definition}[section]
\newtheorem{exe}{Exercise}[section]
\newcommand{\N}{\mathbb{N}}
\newcommand{\Z}{\mathbb{Z}}
\newcommand{\C}{\mathbb{C}}
\newcommand{\F}{\mathbb{F}}
\let\LaTeXStandardTableOfContents\tableofcontents
\renewcommand{\tableofcontents}{%
	\begingroup%
	\renewcommand{\bfseries}{\relax}%
	\LaTeXStandardTableOfContents%
	\endgroup%
}%
\title{Primitive Roots In Short Intervals}
\date{}
\author{N. A. Carella}
\begin{document}
\thispagestyle{empty}
\date{}
\maketitle

\vskip .25 in 
\textbf{\textit{Abstract}:} Let \(p \geq 2\) be a large prime, and let $N \gg (\log p)^{1+\varepsilon} $. This note proves the existence of primitive roots in the short interval $[M,M+N]$, where $M \geq 2$ is a fixed number, and $ \varepsilon>0$ is a small number. In particular, the least primitive root $g(p)= O\left (  (\log p)^{1+\varepsilon} \right )$, and the least prime primitive root $g^*(p)= O\left (  ( \log p)^{1+\varepsilon} \right )$ unconditionally.  \let\thefootnote\relax\footnote{\today \date{} \\
\textit{Mathematics Subject Classifications}: Primary 11A07, Secondary 11N37. \\
\textit{Keywords}: Least primitive root; Least prime primitive root; Primitive root in short interval.}

\vskip .25 in 
\tableofcontents

\section{Introduction}
Given a large prime $p \geq 2$, and a number $N \leq p$. The standard analytic methods demonstrate the existence of primitive roots in any short interval 
\begin{equation} \label{eq175.03}
\left [M, M+N \right ]
\end{equation}
for any number $N \gg p^{1/2+\varepsilon} $, where $M \geq 2$ is a fixed number, and $ \varepsilon>0$ is a small number, see \cite{ES57}, \cite{DH37}, \cite{CL53}, \cite{PS90}. More elaborate exponential sums methods can reduce the size of the interval to $N \gg p^{1/4+\varepsilon}$, see \cite{BD67}. Further, the explicit upper bound claims that the least primitive root $g(p) \geq 2$ satisfies the inequality 
\begin{equation} \label{eq175.13}
g(p) <\sqrt{p}-2 
\end{equation}
for all primes $p >409$, see \cite{CT15}, and \cite{MT15}. Assuming the GRH, it was proved that $g(p) =O\left ( \log ^6 p\right )$, and the average value is $\overline{g(p)} =O\left ( (\log \log p)^2 \right )$, see \cite{SV92} and \cite{BE93} respectively. \\

Almost all these results are based on the standard indicator function in \hyperlink{lem333.2}{Lemma} \ref{lem333.2}. This note introduces a new technique based on the indicator function in \hyperlink{lem333.3}{Lemma} \ref{lem333.3} to improve the results for primitive roots in short intervals.

\begin{thm} \label{thm1.1}\hypertarget{thm1.1} Given a small number $ \varepsilon>0$, and a sufficiently large prime \(p \geq 2\), let $N \gg (\log p)^{1+\varepsilon}$. Then, the short interval 
\begin{equation} \label{el03}
\left [ M, M+  N\right ]
\end{equation}
contains a primitive root for any fixed $M \geq 2$. In particular, the least primitive root $g(p) =O\left (  (\log p)^{1+\varepsilon} \right )$ unconditionally. 
\end{thm}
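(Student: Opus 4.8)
The plan is to count primitive roots in the short interval \eqref{el03} directly, by summing the primitive-root indicator function of Lemma \ref{lem333.3} over the integers in the interval. Write $\Psi$ for that indicator, so $\Psi(n)=1$ when $\ord_p(n)=p-1$ and $\Psi(n)=0$ otherwise; then
\begin{equation}
\#\{\, n : M \leq n \leq M+N,\ \ord_p(n) = p-1 \,\} = \sum_{M \leq n \leq M+N} \Psi(n),
\end{equation}
and it suffices to bound the right-hand side below by a quantity exceeding $1$ as soon as $N \gg (\log p)^{1+\varepsilon}$. The classical approach, based on expanding $\Psi$ into multiplicative characters as in Lemma \ref{lem333.2}, stalls at $N \gg p^{1/4+\varepsilon}$ precisely because it reduces everything to incomplete character sums over $[M,M+N]$, which only the Burgess bound controls and only for such $N$; the point of using the representation in Lemma \ref{lem333.3} is to rewrite $\Psi$ so that its ``non-main'' part becomes a sum admitting much stronger cancellation.

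First I would isolate the main term. Separating the contribution of the trivial character (equivalently the divisor $d=1$ in the M\"obius-type expansion) gives
\begin{equation}
\sum_{M \leq n \leq M+N} \Psi(n) = \frac{\varphi(p-1)}{p-1}\bigl(N + O(1)\bigr) + E(M,N,p),
\end{equation}
where $E$ collects the contribution of the nontrivial characters. Since $\varphi(p-1)/(p-1) \gg 1/\log\log p$, the main term is already $\gg N/\log\log p$, which for $N \gg (\log p)^{1+\varepsilon}$ dwarfs $1$. Thus the theorem reduces to showing $E(M,N,p) = o\!\left(N/\log\log p\right)$ uniformly in $M$.

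The estimation of $E$ is the heart of the argument and the step I expect to be the main obstacle. It is a weighted sum, over the nontrivial divisors $d \mid p-1$ (with weights $\mu(d)/\varphi(d)$) and over the characters $\chi$ of order $d$, of incomplete sums of the shape $\sum_{M \leq n \leq M+N}\chi(n)$ — or, in the form supplied by Lemma \ref{lem333.3}, of the exponential sums built into that representation. P\'olya--Vinogradov gives only $O(\sqrt p\,\log p)$ per character, and Burgess a nontrivial saving only for $N \gg p^{1/4+\varepsilon}$; so the entire result rests on extracting from Lemma \ref{lem333.3} a bound of the form $E(M,N,p) = O\!\left(N/(\log p)^{\varepsilon/2}\right)$, uniform in $M$ after summing over all $\chi$ and all $d \mid p-1$. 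Making such a bound work — in particular controlling it uniformly over the divisors of $p-1$, of which there may be $p^{o(1)}$ — is the delicate point; everything else is routine bookkeeping.

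Granting that estimate, one combines the two displays to obtain
\begin{equation}
\sum_{M \leq n \leq M+N} \Psi(n) \gg \frac{N}{\log\log p} > 1
\end{equation}
for all sufficiently large $p$ whenever $N \gg (\log p)^{1+\varepsilon}$, so the interval \eqref{el03} contains a primitive root. Finally, specialising to $M=2$ and $N \asymp (\log p)^{1+\varepsilon}$ and using $g(p)\geq 2$ yields $g(p) = O\!\left((\log p)^{1+\varepsilon}\right)$ unconditionally.
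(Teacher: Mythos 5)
Your overall architecture is the same as the paper's: expand the indicator of Lemma \ref{lem333.3} over the interval, peel off a main term of size $\frac{\varphi(p-1)}{p-1}N \gg N/\log\log p$, and try to show the remainder is smaller. But your write-up contains a genuine gap at exactly the decisive point: you never establish the error-term bound, you only ``grant'' it. With $N \asymp (\log p)^{1+\varepsilon}$ the error term $E(M,N,p)$ is, before any cancellation, a sum of roughly $N\,(p-1)\,\varphi(p-1)$ unimodular terms divided by $p$, so you must save a factor on the order of $p$ over the trivial bound to get below $N/\log\log p$. That saving \emph{is} the theorem; deferring it means the proposal proves nothing beyond the (easy) main-term computation. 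The paper supplies this step in Lemma \ref{lem899.06}, which rests on Lemma \ref{lem333.22} and the exponential-sum bound of Theorem \ref{thm3.4} for $\sum_{\gcd(n,p-1)=1} e^{i2\pi b\tau^n/p}$; any honest completion of your argument has to engage with those estimates (and with whether the claimed factorization of the triple sum into $U_p\cdot V_p$ there is legitimate, since the variable $m$ occurs in both factors).

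A secondary issue: you describe $E$ as a weighted sum over divisors $d\mid p-1$ and multiplicative characters $\chi$ of order $d$ of incomplete sums $\sum_{M\le n\le M+N}\chi(n)$. That is the error term arising from the divisor-dependent expansion of Lemma \ref{lem333.2}, not from Lemma \ref{lem333.3}. Under Lemma \ref{lem333.3} the nontrivial part is the triple sum
\begin{equation}
\frac{1}{p}\sum_{M\le u\le M+N}\;\sum_{\gcd(n,p-1)=1}\;\sum_{0<m\le p-1}\psi\bigl((\tau^n-u)m\bigr),
\end{equation}
in which the required cancellation must come from the sums $\sum_{\gcd(n,p-1)=1} e^{i2\pi m\tau^n/p}$ over the reduced residues $n$, not from incomplete multiplicative character sums over the interval. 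Since your stated reason for preferring Lemma \ref{lem333.3} is precisely to escape the Burgess barrier attached to those incomplete character sums, conflating the two error terms obscures what actually has to be proved.
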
 

As the probability of a primitive root modulo $p$ is $O(1/\log \log p)$, this result is nearly optimal, see Section \ref{s222} for a discussion.\\

The existence of prime primitive roots in short interval $[M,M+N]$ requires information about primes in short intervals such that $N < p^{1/2}$, and $M \geq 2$ is any fixed number, which is not available in the literature. But, for the long interval $[2, x]$, it is feasible. Recently, it was proved that the least prime primitive root $ g^*(p)= O\left (  p^{\varepsilon} \right )$, unconditionally, see \cite{CN17}. Moreover, assuming standard conjectures, the least prime primitive root is expected to be $g^{*}(p) =O\left ( (\log p) (\log \log p)^2 \right )$, see \cite{BE97}. A very close upper bound is provided here.

\begin{thm} \label{thm1.2}\hypertarget{thm1.2} If \(p \geq 2\) is a sufficiently large prime, then, the least prime primitive root satisfies
\begin{equation} \label{el05}
g^{*}(p) =O\left (  (\log p)^{1+\varepsilon} \right )
\end{equation}
for any small number $ \varepsilon>0$, unconditionally. 
\end{thm}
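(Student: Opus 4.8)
The plan is to prove Theorem~\ref{thm1.2} by exhibiting a prime primitive root in the interval $[2,N]$, where $N$ is a suitable constant multiple of $(\log p)^{1+\varepsilon}$ and $p$ is large; the argument follows the pattern of the proof of Theorem~\ref{thm1.1}, with the integers of the interval replaced by primes by inserting a von Mangoldt weight. Note that, in contrast to a generic short interval $[M,M+N]$, the interval $[2,N]$ starts at $2$, so the prime number theorem applies and no unproven hypothesis on primes in short intervals is invoked. Introduce
\[
R(N,p)=\sum_{n\le N}\Lambda(n)\,\Psi(n),
\]
where $\Lambda$ is the von Mangoldt function and $\Psi$ is the primitive root indicator function supplied by Lemma~\ref{lem333.3}. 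Inserting the expansion of $\Psi$ and separating off the principal character yields $R(N,p)=U(N,p)+V(N,p)$, with main term $U(N,p)=\frac{\varphi(p-1)}{p-1}\,\psi(N)$, $\psi(N)=\sum_{n\le N}\Lambda(n)$, and error term $V(N,p)$ collecting the nonprincipal characters $\chi$ of squarefree order $d\mid p-1$, $d>1$, each weighted by $\mu(d)/\varphi(d)$.

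First I would establish the lower bound for the main term. The prime number theorem gives $\psi(N)\sim N$, while the minimal order of $\varphi(m)/m$ gives $\varphi(p-1)/(p-1)\gg 1/\log\log p$, so that
\[
U(N,p)\ \gg\ \frac{N}{\log\log p}\ \gg\ \frac{(\log p)^{1+\varepsilon}}{\log\log p}\ \longrightarrow\ \infty .
\]
Hence it suffices to prove $V(N,p)=o\!\bigl(U(N,p)\bigr)$: this makes $R(N,p)>0$ for all large $p$, and as the prime powers $n=q^{k}$ with $k\ge 2$ contribute only $O\!\bigl(\sqrt N\log N\bigr)=o\!\bigl(N/\log\log p\bigr)$ to $R(N,p)$, there must be a prime $q\le N$ with $\Psi(q)=1$; that is, $g^{*}(p)\le N=O\!\bigl((\log p)^{1+\varepsilon}\bigr)$.

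The crux is the bound on $V(N,p)$. Writing it out,
\[
V(N,p)\ \ll\ \frac{\varphi(p-1)}{p-1}\sum_{\substack{d\mid p-1\\ d>1}}\frac{\mu^{2}(d)}{\varphi(d)}\ \max_{\ord(\chi)=d}\ \Bigl|\sum_{n\le N}\Lambda(n)\chi(n)\Bigr| ,
\]
and because $\sum_{d\mid p-1}\mu^{2}(d)/\varphi(d)=(p-1)/\varphi(p-1)$ cancels the outer factor $\varphi(p-1)/(p-1)$, it is enough to show $\sum_{n\le N}\Lambda(n)\chi(n)=o\!\bigl(N/\log\log p\bigr)$ uniformly over the characters $\chi$ in question. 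I would handle $\sum_{n\le N}\Lambda(n)\chi(n)$ by Vaughan's identity, reducing it to Type~I sums and bilinear Type~II sums $\sum_{m}\sum_{k}a_{m}b_{k}\chi(mk)$, and then estimate the latter using the structural features of the indicator of Lemma~\ref{lem333.3}: in particular it involves only characters of prime order dividing $p-1$, and it is arranged so that the associated bilinear forms already display cancellation on intervals of length $N\gg(\log p)^{1+\varepsilon}$.

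The main obstacle, and the point on which the whole argument rests, is exactly this last estimate. Extracting genuine cancellation from a character sum over primes on an interval of length only a fixed power of $\log p$ lies well beyond the reach of the P\'olya--Vinogradov and Burgess inequalities, so everything hinges on the particular design of the indicator function of Lemma~\ref{lem333.3} and on a careful bookkeeping of how its prime-order support interacts with the Vaughan decomposition. Once $V(N,p)=o\!\bigl(U(N,p)\bigr)$ is in place, $R(N,p)>0$ for all large $p$, the interval $[2,N]$ contains a prime that is a primitive root modulo $p$, and the bound $g^{*}(p)=O\!\bigl((\log p)^{1+\varepsilon}\bigr)$ follows, which is Theorem~\ref{thm1.2}.
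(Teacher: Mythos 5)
Your proposal is not a proof: it reduces Theorem \ref{thm1.2} to the estimate $\sum_{n\le N}\Lambda(n)\chi(n)=o\bigl(N/\log\log p\bigr)$, uniformly over nonprincipal characters $\chi$ modulo $p$, on an interval of length $N\asymp(\log p)^{1+\varepsilon}$, and then leaves that estimate as an aspiration. This is the entire content of the theorem, and it is unattainable by the tools you name: Vaughan's identity plus any known (or even GRH-conditional) bound gives nothing on intervals of length a fixed power of $\log p$ --- GRH saves roughly $\sqrt{N}$ only for $N$ larger than about $(\log p)^{2}$, and P\'olya--Vinogradov and Burgess require $N\gg p^{1/4+\varepsilon}$. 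There is also an internal inconsistency you should fix: the expansion you actually write down --- principal character main term $\frac{\varphi(p-1)}{p-1}\psi(N)$ plus multiplicative characters of squarefree order $d\mid p-1$ weighted by $\mu(d)/\varphi(d)$ --- is the divisor-dependent indicator of Lemma \ref{lem333.2}, not Lemma \ref{lem333.3}. Lemma \ref{lem333.3} contains no multiplicative characters at all, so your hope that "the particular design of the indicator function of Lemma \ref{lem333.3}" will make the Type~II sums cancel cannot be realized inside the decomposition you set up; as formulated, the argument has no mechanism for producing the required cancellation.

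For comparison, the paper's proof takes a genuinely different route precisely to avoid character sums over primes in short intervals. It inserts the additive-character (divisor-free) indicator of Lemma \ref{lem333.3} into $\sum_{u\le x}\Psi(u)\Lambda(u)$, so the main term becomes $\frac{\varphi(p-1)}{p}\sum_{u\le x}\Lambda(u)$, handled by Lemma \ref{lem999.86} via the ordinary prime number theorem on $[2,x]$ (your observation that starting the interval at $2$ sidesteps primes-in-short-intervals is the same point the paper makes), while the error term becomes a triple exponential sum estimated in Lemma \ref{lem899.16} by reduction to the double sum $\sum_{\gcd(n,p-1)=1}e^{i2\pi b\tau^{n}/p}\ll p^{1-\varepsilon}$ of Theorem \ref{thm3.4} together with Lemma \ref{lem333.22}. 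Whatever one thinks of the details of those estimates, that is where the paper locates the difficulty; your write-up never engages with it, and the step on which "the whole argument rests," by your own admission, is missing.
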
 

\begin{thm} \label{thm1.3}\hypertarget{thm1.3} Let \(p \geq 2\) be a sufficiently large prime, and let $N \gg p^{.535}$. Then, the short interval 
\begin{equation} \label{el07}
\left [ M, M+  N\right ]
\end{equation}
contains a prime primitive root for any fixed $M \geq 2$ unconditionally. 
\end{thm}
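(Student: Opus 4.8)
The plan is to count prime primitive roots in $[M,M+N]$ by means of the indicator function $\psi$ for primitive roots modulo $p$ furnished by Lemma \ref{lem333.3}, and to show that the resulting sum over primes is positive. Writing
\[
S(M,N)\;=\;\sum_{\substack{M\le q\le M+N\\ q\text{ prime}}}\psi(q),
\]
I would expand $\psi$ into its principal part (the density $\varphi(p-1)/(p-1)$ times the prime-counting function on the interval) plus a sum of nonprincipal character sums indexed by the divisors $d\mid p-1$ with weights $\mu(d)/\varphi(d)$, in the same manner as in the derivations of Theorems \ref{thm1.1} and \ref{thm1.2}. Thus $S(M,N)=\text{(main term)}+\text{(error term)}$, and the task is to dominate the error by the main term.

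For the main term, I would invoke the Baker--Harman--Pintz theorem: for all sufficiently large $X$, the interval $[X,X+X^{0.525}]$ contains $\gg X^{0.525}/\log X$ primes, so $\pi(X+X^{0.525})-\pi(X)\gg X^{0.525}/\log X$. Since $M$ is fixed (and in any event $M=O(p)$), the hypothesis $N\gg p^{0.525}$ forces $N\gg (M+N)^{0.525}$, whence $[M,M+N]$ contains $\gg N/\log N$ primes; when $M$ is bounded this is immediate from the prime number theorem and the case $M=2$ is already covered by Theorem \ref{thm1.2}. Consequently the main term is
\[
\frac{\varphi(p-1)}{p-1}\bigl(\pi(M+N)-\pi(M)\bigr)\;\gg\;\frac{1}{\log\log p}\cdot\frac{N}{\log N},
\]
using the elementary lower bound $\varphi(p-1)/(p-1)\gg 1/\log\log p$.

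The crux, and the step I expect to be the main obstacle, is a power-saving estimate for the hybrid character sum over primes in the short interval,
\[
\Bigl|\sum_{\substack{M\le q\le M+N\\ q\text{ prime}}}\chi(q)\Bigr|\;\ll\;N\,p^{-\delta}
\]
for every nonprincipal character $\chi$ modulo $p$ and some fixed $\delta>0$. I would obtain this by a Vaughan (or Heath--Brown) identity, reducing to type~I and type~II bilinear sums in $\chi$, and then bounding the inner incomplete character sums of length up to $N$ by Burgess's inequality; this is effective precisely because $N\gg p^{0.525}\gg p^{1/4+\varepsilon}$, so the exponent $0.525$ is dictated by the prime-detection step and not by the character estimates. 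Summing over $d\mid p-1$ against the weights $\mu(d)/\varphi(d)$ multiplies the bound by at most $2^{\omega(p-1)}=p^{o(1)}$, so the total error is $N p^{-\delta+o(1)}=o\!\left(N/(\log N\cdot\log\log p)\right)$. Therefore $S(M,N)>0$ for all sufficiently large $p$, which exhibits the desired prime primitive root in $[M,M+N]$ and completes the proof.
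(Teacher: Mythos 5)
Your decomposition is not the one the paper uses: although you cite Lemma \ref{lem333.3}, what you actually expand is the divisor--dependent indicator of Lemma \ref{lem333.2} (weights $\mu(d)/\varphi(d)$ over $d\mid p-1$, multiplicative characters of order $d$). The paper's proof of Theorem \ref{thm1.3} instead runs the divisor--free, additive-character indicator of Lemma \ref{lem333.3} through the weighted sum $\sum_{M\le u\le M+N}\Psi(u)\Lambda(u)$, takes the main term from the Baker--Harman--Pintz bound $\sum_{M\le n\le M+N}\Lambda(n)\gg N$ for $N\gg p^{.525}$ (Lemma \ref{lem999.96}) together with $\varphi(p-1)/p\gg 1/\log\log p$, and controls the error term (Lemma \ref{lem899.16}) by the double exponential sum estimate $\sum_{\gcd(n,p-1)=1}e^{i2\pi b\tau^n/p}\ll p^{1-\varepsilon}$ of Theorem \ref{thm3.4} --- precisely so that no cancellation in a character sum \emph{over primes} is ever needed. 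Your main term is fine (and, as you note, for fixed $M$ the interval is essentially an initial segment, so the prime number theorem already suffices there), but your error term commits you to the much harder estimate $\bigl|\sum_{M\le q\le M+N}\chi(q)\bigr|\ll Np^{-\delta}$ for every nonprincipal $\chi \bmod p$ with $N\asymp p^{.525}$.

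That estimate is the genuine gap, and the route you sketch for it does not work. For a \emph{multiplicative} character the Vaughan/Heath--Brown machinery degenerates: in a Type~II sum $\sum_m\sum_n a_mb_n\chi(mn)$ one has $\chi(mn)=\chi(m)\chi(n)$, so after Cauchy--Schwarz the off-diagonal terms carry the factor $\chi(n_1)\overline{\chi(n_2)}$ against \emph{arbitrary} coefficients $b_{n_1}\overline{b_{n_2}}$ and a positive counting weight; choosing $b_n=\overline{\chi(n)}$ shows no saving is available from bilinearity alone. (This is exactly why sums of $\Lambda(n)\chi(n)$ are classically treated via zeros of $L(s,\chi)$ rather than combinatorial identities, and why Burgess --- which you would only get to apply to inner ranges far shorter than $p^{1/4}$ anyway once $m$ is large in the Type~I pieces --- does not rescue the argument.) A power-saving bound for $\sum_{q\le N}\chi(q)$ with $N\ge p^{1/2+\varepsilon}$ does exist unconditionally (Karatsuba's theorem on character sums over primes), and citing it would close your gap for fixed $M$ since $0.525>1/2$; but as written your proposal asserts the crux estimate by a method that fails, so the proof is incomplete.
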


The fundamental background materials are discussed in the earlier sections. \hyperlink{S887}{Section} \ref{S887} presents a proof of \hyperlink{thm1.1}{Theorem} \ref{thm1.1}, the penultimate section presents a proofs of \hyperlink{thm1.2}{Theorem} \ref{thm1.2}, and the last section presents a proof of \hyperlink{thm1.3}{Theorem} \ref{thm1.3}. \\


\section{Primitive Roots Test} \label{S969}\hypertarget{S969}
For a prime $p \geq 2$, the multiplicative group of the finite fields $\mathbb{F}_p$ is a cyclic group for all primes. 

\begin{dfn} \label{dfn969.29}\hypertarget{dfn969.29}{ \normalfont The order $\min \{k \in \mathbb{N}: u^k \equiv 1 \bmod p \}$ of an element $u \in \mathbb{F}_p$ is denoted by $\ord_p(u)$. An element is a \textit{primitive root} if and only if $\ord_p(u)=p-1$. }
\end{dfn}
The Euler totient function counts the number of relatively prime integers \(\varphi (n)=\#\{ k:\gcd (k,n)=1 \}\). This counting function is compactly expressed by the analytic formula \(\varphi (n)=n\prod_{p \mid n}(1-1/p),n\in \mathbb{N} .\)
\begin{lem} {\normalfont (Fermat-Euler)} \label{lem2.1}If \(a\in \mathbb{Z}\) is an integer such that \(\gcd (a,n)=1,\) then \(a^{\varphi (n)}\equiv
	1 \bmod n\).
\end{lem}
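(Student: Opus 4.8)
The plan is to give the classical multiplicative-cancellation argument, since it is elementary and uses nothing beyond the definition of $\varphi(n)$ already introduced. First I would fix a reduced residue system modulo $n$, that is, a set of representatives $\{r_1, r_2, \ldots, r_{\varphi(n)}\}$ for the residue classes coprime to $n$; by the definition of the totient function there are exactly $\varphi(n)$ of them.

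The key step is to show that multiplication by $a$ merely permutes this system modulo $n$. Since $\gcd(a,n)=1$ and $\gcd(r_i,n)=1$, each product $a r_i$ again satisfies $\gcd(a r_i, n)=1$. Moreover the products are pairwise distinct modulo $n$: if $a r_i \equiv a r_j \bmod n$, then $n \mid a(r_i - r_j)$, and because $\gcd(a,n)=1$ this forces $n \mid (r_i - r_j)$, so $r_i \equiv r_j \bmod n$. Hence $\{a r_1, \ldots, a r_{\varphi(n)}\}$ is again a reduced residue system, i.e. it coincides, as a collection of residue classes, with $\{r_1, \ldots, r_{\varphi(n)}\}$.

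I would then compare the two products. Multiplying over both (identical) sets of residue classes gives
\begin{equation}
\prod_{i=1}^{\varphi(n)} (a r_i) \equiv \prod_{i=1}^{\varphi(n)} r_i \bmod n,
\end{equation}
and factoring the $a$'s out of the left-hand side yields $a^{\varphi(n)} \prod_i r_i \equiv \prod_i r_i \bmod n$. The product $R = \prod_i r_i$ is itself coprime to $n$, hence invertible modulo $n$; multiplying through by its inverse cancels $R$ and leaves $a^{\varphi(n)} \equiv 1 \bmod n$, as claimed.

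There is no genuine obstacle here, as this is a standard cancellation proof; the only point demanding care is the permutation claim, where one must verify both that each $a r_i$ remains coprime to $n$ and that the map $r \mapsto a r$ is injective on residue classes, and the hypothesis $\gcd(a,n)=1$ is precisely what delivers both. Alternatively, one may bypass the computation entirely: the residues coprime to $n$ form the multiplicative group $(\Z/n\Z)^{\times}$ of order $\varphi(n)$, and Lagrange's theorem immediately shows that the order of $a$ divides $\varphi(n)$, whence $a^{\varphi(n)} \equiv 1 \bmod n$.
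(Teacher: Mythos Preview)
Your proof is correct and is the standard reduced-residue-system argument; the alternative via Lagrange's theorem is equally valid. The paper itself gives no proof of this lemma at all, simply stating it as the classical Fermat--Euler theorem, so there is nothing to compare against and your write-up more than suffices.
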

\begin{lem} \label{lem969.05}\hypertarget{lem969.05}  {\normalfont (Primitive root test)} An integer $u \in \Z$ is a primitive root modulo an integer $n \in \N$ if and only if 
\begin{equation*}\label{eq969.52}
u^{\varphi (n)/p} -1\not \equiv 0 \mod  n
\end{equation*}
for all prime divisors $p \mid \varphi (n)$.
\end{lem}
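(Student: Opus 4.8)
The plan is to reduce the statement to the elementary fact that, for $\gcd(u,n)=1$, one has $u^{k}\equiv 1 \bmod n$ if and only if $\ord_n(u)\mid k$. First I would record this fact with a one-line proof via the division algorithm: writing $k = q\,\ord_n(u)+r$ with $0\le r<\ord_n(u)$ gives $u^{k}\equiv u^{r}\bmod n$, and minimality of $\ord_n(u)$ forces $r=0$. I would also note at the outset the standing hypothesis $\gcd(u,n)=1$, since otherwise $u$ is neither a unit nor a primitive root and the congruence test is vacuous.

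For the forward implication I would argue directly: if $u$ is a primitive root modulo $n$, then $\ord_n(u)=\varphi(n)$, and for each prime $p\mid\varphi(n)$ the integer $\varphi(n)/p$ is positive and strictly smaller than $\varphi(n)$, hence not a multiple of $\varphi(n)$; by the fact above $u^{\varphi(n)/p}\not\equiv 1\bmod n$, which is exactly $u^{\varphi(n)/p}-1\not\equiv 0\bmod n$ for all such $p$.

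For the converse I would take the contrapositive. Suppose $u^{\varphi(n)/p}-1\not\equiv 0\bmod n$ for every prime $p\mid\varphi(n)$, and set $d=\ord_n(u)$. By Lemma \ref{lem2.1} (Fermat--Euler) we have $d\mid\varphi(n)$. If $d<\varphi(n)$, then $\varphi(n)/d>1$, so some prime $p$ divides $\varphi(n)/d$; writing $\varphi(n)/d=pm$ gives $\varphi(n)/p=dm$, whence $d\mid\varphi(n)/p$ and therefore $u^{\varphi(n)/p}\equiv 1\bmod n$, contradicting the hypothesis. Hence $d=\varphi(n)$, i.e. $u$ is a primitive root.

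I do not expect a genuine obstacle here: the whole content is the order-divisibility lemma together with the observation that the proper divisors of $\varphi(n)$ are exactly those integers dividing some $\varphi(n)/p$ with $p\mid\varphi(n)$ prime. The only point requiring care is to state the coprimality hypothesis explicitly so that $\ord_n(u)$ exists and divides $\varphi(n)$.
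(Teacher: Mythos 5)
Your proof is correct and complete, but note that the paper does not actually prove this lemma at all: it merely states it and refers the reader to Lucas and to Crandall--Pomerance, so you are supplying an argument where the paper has none. Your two directions are the standard ones -- the forward direction from $\ord_n(u)=\varphi(n)$ together with the fact that $u^k\equiv 1 \bmod n$ iff $\ord_n(u)\mid k$, and the converse by observing that any proper divisor $d$ of $\varphi(n)$ divides $\varphi(n)/p$ for some prime $p\mid \varphi(n)/d$ -- and both are sound. (Minor quibble: you announce the converse "by contrapositive" but actually argue by contradiction; the argument itself is fine.) Your insistence on adding the hypothesis $\gcd(u,n)=1$ is not mere pedantry: as printed the lemma is false without it, since a non-unit $u$ satisfies $u^{k}\not\equiv 1 \bmod n$ for every $k\geq 1$ and hence passes the test vacuously while failing to be a primitive root. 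It is also worth remarking that the "only if" direction of your argument implicitly covers the case where $(\Z/n\Z)^{*}$ is not cyclic: there every unit has order a proper divisor of $\varphi(n)$, so the test correctly rejects every $u$, consistent with the nonexistence of primitive roots for such $n$.
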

The primitive root test is a special case of the Lucas primality test, introduced in {\color{red}\cite[p.\ 302]{ LE78}}. A more recent version appears in {\color{red}\cite[Theorem 4.1.1]{CP05}}, and similar sources. 
\begin{lem} \label{lem969.21}\hypertarget{lem969.21}  {\normalfont (Complexity of primitive root test)} Given a prime $p \geq 2$, and the squarefree part $p_1 p_2 \cdots p_v \mid p-1$, a primitive root modulo $p$ can be determined in deterministic polynomial time $O(\log ^c p)$, some constant $c >1$.
\end{lem}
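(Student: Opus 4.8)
The plan is to decouple the cost of one application of the primitive root test from the number of candidates that must be examined. Fix the prime $p$ and suppose, as in the hypothesis, that the distinct prime divisors $p_1,\dots,p_v$ of $p-1$ are given. For a candidate $u\in\{2,3,\dots\}$, Lemma \ref{lem969.05} says that $u$ is a primitive root modulo $p$ if and only if $u^{(p-1)/p_i}\not\equiv 1\bmod p$ for every $i=1,\dots,v$. I would compute each residue $u^{(p-1)/p_i}\bmod p$ by ordinary square-and-multiply exponentiation, which uses $O(\log p)$ modular multiplications, each costing $O(\log^2 p)$ bit operations with schoolbook arithmetic; hence one such power costs $O(\log^3 p)$. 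Since $v=\omega(p-1)\le \log_2 p$ (the number of distinct prime factors of $p-1$), the complete test for a single $u$ runs in $O(\log^4 p)$ bit operations. Fast integer multiplication improves this, but the crude bound is all that is needed.

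Next I would bound the number of candidates that have to be tried before success. Run the test for $u=2,3,4,\dots$ in increasing order and stop at the first $u$ that passes; the procedure is deterministic and halts after exactly $g(p)-1$ rounds, where $g(p)$ is the least primitive root. By Theorem \ref{thm1.1} one has $g(p)=O\!\left((\log p)^{1+\varepsilon}\right)$ unconditionally, so the loop terminates within $O\!\left((\log p)^{1+\varepsilon}\right)$ rounds. Multiplying by the per-round cost yields a total running time $O\!\left((\log p)^{1+\varepsilon}\cdot\log^4 p\right)=O\!\left((\log p)^{c}\right)$ with, say, $c=6$, which is the assertion. There is no circularity here: the proof of Theorem \ref{thm1.1} in Section \ref{s887} is purely analytic and makes no reference to this lemma.

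The single genuinely indispensable input — and the main obstacle if one wanted an algorithm independent of Theorem \ref{thm1.1} — is an a priori polylogarithmic bound on $g(p)$. Without it, the best unconditional estimate is the Burgess-type bound $g(p)=O\!\left(p^{1/4+\varepsilon}\right)$, which gives only a running time subexponential in $\log p$, not polynomial; the classical deterministic polynomial-time guarantees (e.g.\ via $g(p)=O(\log^6 p)$) are conditional on the GRH. So the heart of the matter is exactly the short-interval result of this paper, and the rest is the routine bookkeeping of modular exponentiation. I would also remark on an alternative organization that builds $g$ from generators of the Sylow subgroups: for each $i$ take the least $x$ with $x^{(p-1)/p_i}\not\equiv 1\bmod p$, set $g_i=x^{(p-1)/p_i^{e_i}}$ where $p_i^{e_i}\,\|\,(p-1)$, and put $g=g_1g_2\cdots g_v$; but this variant still needs a polylogarithmic bound on the least $p_i$-th power non-residue for each $i$, which is again furnished, simultaneously for all $i$, by Theorem \ref{thm1.1} (a primitive root being a non-residue of every such degree).
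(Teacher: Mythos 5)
Your proposal is correct and follows essentially the same route as the paper: bound the cost of one application of the test in Lemma \ref{lem969.05} by a fixed power of $\log p$, and bound the number of candidates by the paper's unconditional polylogarithmic bound on the least primitive root (the paper invokes Theorem \ref{thm1.2}, you invoke Theorem \ref{thm1.1}, which is the more natural choice and changes nothing). You supply the arithmetic details that the paper delegates to the reference \cite{SV08}, and your observation that the polylog bound on $g(p)$ is the one indispensable input matches the paper's logic exactly.
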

\begin{proof}[\textbf{Proof}] The mechanics of the deterministic polynomial time algorithm are specified in {\color{red}\cite[Chapter 11]{SV08}}. By \hyperlink{thm1.2}{Theorem} \ref{thm1.2}, the algorithm is repeated at most $O\left (  (\log p)^{1+\varepsilon} \right )$ times for each $u=O\left (  (\log p)^{1+\varepsilon} \right )$. These prove the claim.
\end{proof}

\section{Representations of the Characteristic Functions} \label{s333}
The characteristic function \(\Psi :G\longrightarrow \{ 0, 1 \}\) of primitive elements is one of the standard analytic tools employed to investigate the various properties of primitive roots in cyclic groups \(G\). Many equivalent representations of the characteristic function $\Psi $ of primitive elements are possible. Several of these representations are studied in this section.

\subsection{Divisors Dependent Characteristic Function}
A representation of the characteristic function dependent on the orders of the cyclic groups is given below. This representation is sensitive to the primes decompositions $q=p_1^{e_1}p_2^{e_2}\cdots p_t^{e_t}$, with $p_i$ prime and $e_i\geq1$, of the orders of the cyclic groups $q=\# G$. \\

\begin{lem} \label{lem333.2}\hypertarget{lem333.2}
Let \(G\) be a finite cyclic group of order \(p-1=\# G\), and let \(0\neq u\in G\) be an invertible element of the group. Then
\begin{equation} \label{eq333.02}
\Psi (u)=\frac{\varphi (p-1)}{p-1}\sum _{d \mid p-1} \frac{\mu (d)}{\varphi (d)}\sum _{\ord(\chi ) = d} \chi (u)=
\left \{\begin{array}{ll}
1 & \text{ if } \ord_p (u)=p-1,  \\[.3cm]
0 & \text{ if } \ord_p (u)\neq p-1. \\
\end{array} \right .\nonumber
\end{equation}
\end{lem}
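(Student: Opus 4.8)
The plan is to reduce the identity to a short computation with the Dirichlet characters of the cyclic group $G$ and then to a standard multiplicative-function evaluation. First I would fix a generator $g$ of $G$, so that every $u\in G$ can be written uniquely as $u=g^{k}$ with $k=\ind_{g}(u)$ taken modulo $p-1$, and $\ord_{p}(u)=(p-1)/\gcd(k,p-1)$; in particular $u$ is a primitive root if and only if $\gcd(k,p-1)=1$. The dual group $\widehat{G}$ is cyclic of order $p-1$, consisting of the characters $\chi_{a}\colon g^{k}\mapsto e^{2\pi i a k/(p-1)}$ for $a$ modulo $p-1$, and $\ord(\chi_{a})=(p-1)/\gcd(a,p-1)$.

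Next I would evaluate the inner sum over characters of a fixed order $d\mid p-1$. A character $\chi_{a}$ has order exactly $d$ precisely when $\gcd(a,p-1)=(p-1)/d$, i.e. $a=(p-1)b/d$ with $1\le b\le d$ and $\gcd(b,d)=1$, and then $\chi_{a}(u)=e^{2\pi i b k/d}$. Hence
\[
\sum_{\ord(\chi)=d}\chi(u)=\sum_{\substack{1\le b\le d\\ \gcd(b,d)=1}}e^{2\pi i b k/d}=c_{d}(k),
\]
the Ramanujan sum; the same value follows equally well by M\"obius inversion over the divisor lattice from the elementary fact that summing $\chi(u)$ over the subgroup of characters of order dividing $d$ yields $d$ when $d\mid k$ and $0$ otherwise. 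Substituting this back, the middle term of \eqref{eq333.02} becomes $\frac{\varphi(p-1)}{p-1}\sum_{d\mid p-1}\frac{\mu(d)}{\varphi(d)}c_{d}(k)$.

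Finally I would compute $F(m):=\sum_{d\mid m}\frac{\mu(d)}{\varphi(d)}c_{d}(k)$ at $m=p-1$. The summand $d\mapsto\frac{\mu(d)}{\varphi(d)}c_{d}(k)$ is multiplicative in $d$ and vanishes at every prime power $\ell^{j}$ with $j\ge 2$, so the divisor sum factors as $F(m)=\prod_{\ell\mid m}\bigl(1-c_{\ell}(k)/(\ell-1)\bigr)$ over the distinct primes $\ell\mid m$. Since $c_{\ell}(k)=\ell-1$ when $\ell\mid k$ and $c_{\ell}(k)=-1$ otherwise, the factor attached to $\ell$ is $0$ if $\ell\mid\gcd(k,m)$ and $\ell/(\ell-1)$ if $\ell\nmid k$; hence $F(p-1)=0$ when $\gcd(k,p-1)>1$ and $F(p-1)=\prod_{\ell\mid p-1}\ell/(\ell-1)=(p-1)/\varphi(p-1)$ when $\gcd(k,p-1)=1$. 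Multiplying by $\varphi(p-1)/(p-1)$ gives $1$ exactly when $\gcd(k,p-1)=1$, that is, when $\ord_{p}(u)=p-1$, and $0$ otherwise, which is \eqref{eq333.02}.

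I do not expect a genuine obstacle here: the statement is classical and the argument is essentially bookkeeping. The two points that need care are the correct identification of the characters of order exactly $d$ (which is precisely what produces the Ramanujan sum in the inner sum), and the observation that although $p-1$ need not be squarefree, the factor $\mu(d)$ confines the outer sum to squarefree $d$, so only the distinct prime divisors of $p-1$ enter the final product.
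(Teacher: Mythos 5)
Your proposal is correct and follows essentially the same route as the paper: evaluate the inner sum over characters of exact order $d$ (the Ramanujan sum $c_d(k)$, which the paper computes as $q-1$ or $-1$ at primes $q$ according to whether $u$ is a $q$th power residue), then use multiplicativity to turn the divisor sum into a product over the primes dividing $p-1$ and read off when a factor vanishes. If anything, your version is the more careful one, since it handles the mixed case where $u$ is a $q$th power residue for some but not all primes $q\mid p-1$, which the paper's displayed products gloss over.
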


\begin{proof}[\textbf{Proof}] Assume that $u=\tau^{qm}$ is a $q$th power residue modulo $p$, where $q\mid p-1$ and $\gcd(m,p-1)=1$. Then, the inner sum
\begin{equation} 
\sum _{ \ord(\chi) = q} \chi (u)= \sum _{ \ord(\chi) = q} \chi (\tau^{qm})=\sum _{ \ord(\chi) = q} \chi (\tau^{m})^q=\varphi(q)=q-1,
\end{equation}
 where $\chi(v)^q=1$. Replacing this information into the product
\begin{eqnarray} 
\frac{\phi(p-1)}{p-1} \sum_{d \mid p-1}\frac{\mu(d)}{\varphi(d)} \sum_{\ord(\chi)=d}\chi(u)
&=&\frac{\phi(p-1)}{p-1} \prod_{q \mid p-1} \left (1- \frac{\sum_{\ord(\chi)=q}\chi(u)}{q-1} \right )  \nonumber \\[.3cm]
&=&\frac{\phi(p-1)}{p-1} \prod_{q \mid p-1} \left (1- \frac{q-1}{q-1} \right )=0 .
\end{eqnarray}
shows that both sides of the equation vanish if the element $u \in G$ has order $\ord_p(u) =q \mid p-1$ and $q < p-1$. Now, assume that $u=\tau^{m}$ is not $q$th power residue modulo $p$ for any $q \mid p-1$, where $\gcd(m,p-1)=1$. Then, the inner sum
\begin{equation} 
\sum _{ \ord(\psi) = q} \chi (u)= \sum _{ \ord(\psi) = q} \chi (\tau^{m})=-1.
\end{equation}
Replacing this information into the product
\begin{eqnarray} 
\frac{\phi(p-1)}{p-1} \sum_{d \mid p-1}\frac{\mu(d)}{\varphi(d)} \sum_{\ord(\chi)=d}\chi(u)
&=&\frac{\phi(p-1)}{p-1} \prod_{q \mid p-1} \left (1- \frac{\sum_{\ord(\chi)=q}\chi(u)}{q-1} \right )  \nonumber \\[.3cm]
&=&\frac{\phi(p-1)}{p-1} \prod_{q \mid p-1} \left (1- \frac{-1}{q-1} \right )=1 .
\end{eqnarray}
These verify that both sides of the equation vanishes if and only if the element $u \in G$ has order $\ord_p(u) =q \mid p-1$ and $q < p-1$.
\end{proof}
	
The precise source of formula \eqref{eq333.02} is not clear. The authors in \cite{DH37}, and \cite{WR01} attributed this formula to Vinogradov, and other authors have attributed it to Landau, \cite{LE1927}. The proof and other details on the characteristic function are given in {\color{red}\cite[p. 863]{ES57}}, {\color{red}\cite[p.\ 258]{LN97}}, {\color{red}\cite[p.\ 18]{MP07}}. The characteristic function for multiple primitive roots is used in {\color{red}\cite[p.\ 146]{CZ98}} to study consecutive primitive roots. In \cite{DS12} it is used to study the gap between primitive roots with respect to the Hamming metric. And in \cite{WR01} it is used to prove the existence of primitive roots in certain small subsets \(A\subset \mathbb{F}_p\). In \cite{DH37} it is used to prove that some finite fields do not have primitive roots of the form $a\tau+b$, with $\tau$ primitive and $a,b \in \mathbb{F}_p$ constants. In addition, the Artin primitive root conjecture for polynomials over finite fields was proved in \cite{PS95} using this formula.

\subsection{Divisors Free Characteristic Function}
It often difficult to derive any meaningful result using the usual divisors dependent characteristic function of primitive elements given in \hyperlink{lem333.2}{Lemma} \ref{lem333.2}. This difficulty is due to the large number of terms that can be generated by the divisors, for example, \(d\mid p-1\), involved in the calculations, see \cite{ES57}, \cite{DS12} for typical applications and {\color{red}\cite[p.\ 19]{MP04}} for a discussion. \\
	
A new \textit{divisors-free} representation of the characteristic function of primitive element is developed here. This representation can overcomes some of the limitations of its counterpart in certain applications. The \textit{divisors dependent representation} of the characteristic function of primitive roots, \hyperlink{lem333.2}{Lemma} \ref{lem333.2}, detects the order \(\ord_p (u)\) of the element \(u\in \mathbb{F}_p\) by means of the divisors of the totient \(p-1\). In contrast, the \textit{divisors-free representation} of the characteristic function, \hyperlink{lem333.3}{Lemma} \ref{lem333.3}, detects the order \(\text{ord}_p(u) \geq 1\) of the element \(u\in \mathbb{F}_p\) by means of the solutions of the equation \(\tau ^n-u=0\) in \(\mathbb{F}_p\), where \(u,\tau\) are constants, and \(1\leq n<p-1, \gcd (n,p-1)=1,\) is a variable. 
\begin{lem} \label{lem333.3}\hypertarget{lem333.3}
Let \(p\geq 2\) be a prime, and let \(\tau\) be a primitive root mod \(p\). If \(u\in\mathbb{F}_p\) is a nonzero element, and \(\psi \neq 1\) is a nonprincipal additive character of order \(\ord \psi =p\), then
\begin{equation}
\Psi (u)=\sum _{\gcd (n,p-1)=1} \frac{1}{p}\sum _{0\leq m\leq p-1} \psi \left ((\tau ^n-u)m\right)=\left \{
\begin{array}{ll}
1 & \text{ if } \ord_p(u)=p-1,  \\[.3cm]
0 & \text{ if } \ord_p(u)\neq p-1. \\
\end{array} \right .\nonumber
\end{equation}
\end{lem}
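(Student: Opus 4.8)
The plan is to establish the claimed identity by evaluating the double sum directly, exploiting the orthogonality relation for the additive characters of $\mathbb{F}_p$. First I would fix the inner sum: for any fixed $n$ with $\gcd(n,p-1)=1$, the quantity $\frac{1}{p}\sum_{0\leq m\leq p-1}\psi\left((\tau^n-u)m\right)$ is, by orthogonality of additive characters, exactly $1$ if $\tau^n-u\equiv 0\bmod p$ and $0$ otherwise. Thus the entire double sum collapses to $\#\{\,n : 1\leq n<p-1,\ \gcd(n,p-1)=1,\ \tau^n\equiv u\bmod p\,\}$, the number of admissible exponents $n$ that represent $u$ as a power of the fixed primitive root $\tau$.

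Next I would analyze this counting function. Since $\tau$ is a primitive root, every nonzero $u\in\mathbb{F}_p$ can be written uniquely as $u=\tau^{k}$ with $0\leq k\leq p-2$, and the congruence $\tau^n\equiv\tau^k\bmod p$ holds if and only if $n\equiv k\bmod (p-1)$. Restricting to the range $1\leq n<p-1$, there is exactly one such $n$, namely $n=k$ (with the convention handling $k=0$, i.e. $u=1$, which has order $1\neq p-1$ and contributes $0$ since $n=0$ is excluded). Therefore the double sum equals $1$ precisely when the unique representative exponent $k$ satisfies $\gcd(k,p-1)=1$, and equals $0$ otherwise. The final step is to observe that $\gcd(k,p-1)=1$ is equivalent to $\ord_p(\tau^k)=p-1$, i.e. to $u$ being a primitive root: indeed $\ord_p(\tau^k)=(p-1)/\gcd(k,p-1)$. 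This matches the asserted two-case value of $\Psi(u)$.

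The main point requiring care — rather than a genuine obstacle — is the bookkeeping at the endpoints of the summation range $1\leq n<p-1$ versus the natural range $0\leq k\leq p-2$ of exponents: one must check that excluding $n=0$ loses nothing (it only would have captured $u=1$, a non-primitive-root) and that $n=p-1$ is correctly excluded (it would be $\equiv 0$, and moreover $\gcd(p-1,p-1)=p-1\neq 1$ so it is non-admissible anyway). Once this is settled, the equality of the sum with $\Psi(u)$ is immediate from the formula $\ord_p(\tau^k)=(p-1)/\gcd(k,p-1)$. No deep input is needed; the lemma is essentially a repackaging of the discrete-log description of primitive roots through additive-character orthogonality, and the whole argument is a short computation.
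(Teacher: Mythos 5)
Your proposal is correct and follows essentially the same route as the paper's proof: the inner sum over $m$ is evaluated by additive-character orthogonality (the paper phrases this via the geometric series identity) to detect the solutions of $\tau^n-u=0$, and the outer condition $\gcd(n,p-1)=1$ ensures $\tau^n$ ranges exactly over the primitive roots, so the count is $1$ or $0$ according to whether $u$ is a primitive root. Your treatment is in fact slightly more careful than the paper's, since you make explicit the uniqueness of the exponent in the range $1\leq n<p-1$ and the identity $\ord_p(\tau^k)=(p-1)/\gcd(k,p-1)$, which the paper leaves implicit.
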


\begin{proof}[\textbf{Proof}] As the index \(n\geq 1\) ranges over the integers relatively prime to \(p-1\), the element \(\tau ^n\in \mathbb{F}_p\) ranges over the primitive roots \(\text{mod } p\). Ergo, the equation
\begin{equation}\label{eq33.30}
\tau ^n- u=0
\end{equation} 
has a solution if and only if the fixed element \(u\in \mathbb{F}_p\) is a primitive root. Next, replace \(\psi (z)=e^{i 2\pi  z/p }\) to obtain
\begin{equation}
\Psi(u)=\sum_{\gcd (n,p-1)=1} \frac{1}{p}\sum_{0\leq m\leq p-1} e^{i 2\pi  (\tau ^n-u)m/p }=\left \{
\begin{array}{ll}
1 & \text{ if } \ord_p (u)=p-1,  \\[.3cm]
0 & \text{ if } \ord_p (u)\neq p-1. \\
\end{array} \right.
\end{equation}
This follows from the geometric series identity $\sum_{0\leq m\leq N-1} w^{ m }=(w^N-1)/(w-1)$ with $w \ne 1$, applied to the inner sum. 
\end{proof}

\section{Primes Numbers Results} \label{S533}\hypertarget{S533}
Some prime numbers results focusing on the local minima of the ratio
\begin{equation}\label{eq533.30}
\frac{\varphi(n)}{n}=\prod_{p \mid n}\left( 1- \frac{1}{p} \right)> \frac{1}{e^{\gamma} \log \log n+5/(2 \log \log n)}
\end{equation} 

are recorded in this section. The conditional results are studied in \cite{NJ12}, and the 
unconditional results are proved by various authors as {\color{red}\cite[Theorem 7 and Theorem 15]{RS62}}, and {\color{red}\cite[Theorem 2.9]{MV07}}.

\begin{lem} \label{lem533.01}\hypertarget{lem533.01}
Let \(n\geq 1\) be a large integer, and let $\omega(n)$ be the number of prime divisors $p \mid n$. Then
\begin{enumerate}[font=\normalfont, label=(\roman*)]
\item $\displaystyle\omega(n) \ll \log \log n,$ \tabto{6cm}the average number of prime divisors.
    
 \item $\displaystyle \omega(n) \ll \log n/ \log \log n,$\tabto{6cm}the maximal number of prime divisors.
 \end{enumerate}

\end{lem}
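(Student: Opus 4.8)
The plan is to reduce both parts to classical facts about the distribution of primes, namely the Chebyshev lower bound $\theta(x) \gg x$, Mertens' theorem $\sum_{p \le x} 1/p = \log\log x + M + o(1)$, and the estimate $p_k \gg k \log k$ for the $k$th prime $p_k$; all of these are contained in the references \cite{RS62} and \cite{MV07} already quoted in the statement.

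For (i), I would read the assertion as a statement about the average order of $\omega$, and prove $\sum_{n \le x} \omega(n) = x\log\log x + O(x)$. This follows by interchanging the order of summation: $\sum_{n\le x}\omega(n) = \sum_{n \le x}\sum_{p \mid n} 1 = \sum_{p \le x}\lfloor x/p\rfloor$, since for each prime $p \le x$ there are exactly $\lfloor x/p\rfloor$ multiples of $p$ up to $x$. Writing $\lfloor x/p\rfloor = x/p + O(1)$ and bounding the accumulated error by $\pi(x) = O(x/\log x) = o(x)$, the main term is $x\sum_{p\le x} 1/p = x\bigl(\log\log x + O(1)\bigr)$ by Mertens' theorem. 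Dividing by $x$ shows that $\omega(n)$ has average $\log\log x + O(1)$ over $n \le x$, which is the claim.

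For (ii), I would use that if $\omega(n) = k$ then $n$ is divisible by $k$ distinct primes, hence $n \ge p_1 p_2 \cdots p_k$, the product of the first $k$ primes. Taking logarithms and applying Chebyshev's bound, $\log n \ge \sum_{i \le k} \log p_i = \theta(p_k) \gg p_k$, and since $p_k \gg k \log k$ this gives $\log n \gg k \log k$. A short manipulation then yields $k \ll \log n/\log\log n$: for instance, if $k \le \sqrt{\log n}$ the desired bound is immediate for large $n$, while if $k > \sqrt{\log n}$ then $\log k \gg \log\log n$, so dividing $\log n \gg k\log k$ through by $\log k$ finishes it.

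There is no genuine obstacle here; the argument is entirely classical and the required inputs are exactly the prime-number estimates cited in the statement. The only point needing a little attention is uniformity of the implied constants in (ii) for all sufficiently large $n$, i.e. that $\theta(x) \gg x$ and $p_k \gg k\log k$ hold with absolute constants, which is precisely what the explicit results in \cite{RS62} supply.
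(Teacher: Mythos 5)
Your proof is correct; the paper itself gives no argument here, simply citing \cite[Theorem 2.6]{MV07}, and the classical proofs you supply (the $\sum_{p\le x}\lfloor x/p\rfloor$ computation with Mertens for the average order, and the primorial bound $n\ge p_1\cdots p_k$ with Chebyshev for the maximal order) are exactly the standard arguments behind that citation.
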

\begin{proof}[\textbf{Proof}] These are standard results in analytic number theory, see {\color{red}\cite[Theorem 2.6]{MV07}}.
\end{proof}
\begin{lem} \label{lem533.21}\hypertarget{lem533.21}
Let \(x\geq 2\) be a large number, then
\begin{enumerate}[font=\normalfont, label=(\roman*)]
\item $\displaystyle \prod_{p \leq x}\left( 1- \frac{1}{p} \right) 
=\frac{1}{e^{\gamma} \log x}+ O\left (e^{-c_0 \sqrt{ \log x}}\right ),$ \tabto{8.5cm} unconditionally.
    
 \item $\displaystyle \prod_{p \leq x}\left( 1- \frac{1}{p} \right) 
=\frac{1}{e^{\gamma} \log x}+\Omega_{\pm} \left (\frac{\log \log \log x}{x^{1/2}} \right ),$\tabto{8.5cm}unconditional oscillation.

 \item $\displaystyle \prod_{p \leq x}\left( 1- \frac{1}{p} \right) 
=\frac{1}{e^{\gamma} \log x}+ O\left (\frac{\log x}{ x^{1/2}} \right ),$\tabto{8.5cm}conditional on the RH.
 \end{enumerate}
The symbol $\gamma$ is the Euler constant, and $c_0>0$ is an absolute constant. 
\end{lem}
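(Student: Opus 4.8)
The plan is to pass to logarithms and reduce all three parts to the corresponding statements for Mertens' first theorem. Writing $\log(1-1/p)=-1/p-\sum_{k\ge 2}1/(kp^k)$ and summing over $p\le x$ gives
\begin{equation*}
\log\prod_{p\le x}\Bigl(1-\frac1p\Bigr)=-\sum_{p\le x}\frac1p+\sum_{p\le x}\Bigl(\log\Bigl(1-\frac1p\Bigr)+\frac1p\Bigr).
\end{equation*}
The second sum converges absolutely; writing it as $C_0-\sum_{p>x}(\log(1-1/p)+1/p)$ with $C_0=\sum_p(\log(1-1/p)+1/p)$, the tail is $O(1/x)$ by the crude bound $\sum_{n>x}n^{-2}\ll 1/x$. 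Using the Meissel--Mertens identity $M=\gamma+C_0$ --- which is exactly what forces the constant $e^{-\gamma}$ in Mertens' third theorem --- and writing $\sum_{p\le x}1/p=\log\log x+M+E(x)$, one obtains
\begin{equation*}
\log\prod_{p\le x}\Bigl(1-\frac1p\Bigr)=-\log\log x-\gamma-E(x)+O(1/x).
\end{equation*}
Exponentiating and using $e^{t}=1+O(|t|)$ for bounded $t$ yields $\prod_{p\le x}(1-1/p)=\dfrac{e^{-\gamma}}{\log x}\bigl(1+O(|E(x)|+1/x)\bigr)$, so each part reduces to an estimate for the remainder $E(x)$ in Mertens' first theorem.

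For part (i) I would feed the de la Vall\'ee Poussin form of the prime number theorem, $\pi(x)=\li(x)+O\!\bigl(xe^{-c\sqrt{\log x}}\bigr)$, into $\sum_{p\le x}1/p=\int_{2^-}^{x}t^{-1}\,d\pi(t)$, integrate the error term by parts, and use that the integral $\int^{\infty}(\pi(t)-\li(t))t^{-2}\,dt$ converges with a tail of the same order of magnitude; this gives $E(x)\ll e^{-c_0\sqrt{\log x}}$, and the extra factor $1/\log x$ in the product only improves the bound. For part (iii) the same partial summation applied to the conditional estimate $\pi(x)-\li(x)\ll x^{1/2}\log x$ gives $E(x)\ll x^{-1/2}\log x$, hence a product error $O(x^{-1/2})$, which is contained in the stated $O(x^{-1/2}\log x)$.

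Part (ii) is the delicate one and the main obstacle. Here I would start from Littlewood's oscillation theorem, in the quantitative form $\pi(x)-\li(x)=\Omega_{\pm}\!\bigl(x^{1/2}(\log\log\log x)/\log x\bigr)$ (equivalently $\psi(x)-x=\Omega_{\pm}(x^{1/2}\log\log\log x)$), and transfer the sign changes through the identity above. The point requiring care is that an $\Omega_{\pm}$-statement does not survive partial summation and exponentiation as automatically as an $O$-estimate: one must check that the genuine sign changes of $\pi(x)-\li(x)$ are not smoothed away by the convergent integral term, and keep precise track of the powers of $\log x$ introduced at each step. The bookkeeping that produces the stated shape of the $\Omega_{\pm}$-bound is carried out in the references \cite{RS62} and \cite{MV07} quoted in the statement, and I would follow those treatments. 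The rest of the argument --- the logarithmic reduction and parts (i) and (iii) --- is routine analytic number theory, so I expect essentially all of the work to lie in this last transfer.
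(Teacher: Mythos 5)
Your reduction is correct, but note that the paper does not actually prove this lemma at all: it simply cites \cite[Theorem 7]{RS62}, \cite{MV07}, \cite{DP09} and Narkiewicz's account of Phragm\'en's work (and even points to a nonexistent equation), so your proposal supplies strictly more than the source does. The logarithmic reduction $\log\prod_{p\le x}(1-1/p)=-\log\log x-\gamma-E(x)+O(1/x)$ is the standard route and is exactly what underlies the cited references; your treatment of (i) via the de la Vall\'ee Poussin error term and of (iii) via the RH bound $\pi(x)-\li(x)\ll x^{1/2}\log x$ is complete in outline and in fact yields slightly more than is claimed in (iii). For (ii) you have correctly isolated the one genuinely nontrivial point, but the reference you should be following is Diamond--Pintz \cite{DP09} (devoted precisely to this oscillation), not \cite{RS62} or \cite{MV07}, which contain only the $O$-estimates. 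Your warning about ``keeping track of the powers of $\log x$'' is well taken and in fact cuts against the lemma as stated: your own identity gives $\prod_{p\le x}(1-1/p)-e^{-\gamma}/\log x\asymp e^{-\gamma}E(x)/\log x$, and since Littlewood-type input produces $E(x)=\Omega_{\pm}\bigl(x^{-1/2}\log\log\log x/\log x\bigr)$, the oscillation of the \emph{product} comes out of size $x^{-1/2}\log\log\log x/\log^{2}x$ rather than the $x^{-1/2}\log\log\log x$ printed in (ii); the larger normalization belongs to $e^{-\gamma}\prod_{p\le x}(1-1/p)^{-1}-\log x$, which is the quantity Diamond and Pintz actually treat. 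So your proof strategy is sound, and the only gap --- the transfer of the $\Omega_{\pm}$ statement --- is one the paper leaves entirely to the literature as well; just be aware that carrying it out honestly forces a correction of the logarithmic factors in part (ii).
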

The explicit estimates are given in {\color{red}\cite[Theorem 7]{RS62}}, and the results for products over arithmetic progression are proved in \cite{LZ07}, et alii. The nonquantitative unconditional oscillations of the error of the product of primes is implied by the work of Phragmen, {\color{red}\cite[p.\ 182]{NW00}}. Since then, various authors have developed quantitative versions, see \cite{RS62}, \cite{DP09}, et alii.


\section{Basic Statistics for Primitive Roots} \label{s222}\hyperlink{S222}
Some elementary information is provided in this section.
\subsection{Probability Of Primitive Roots}
The probability of primitive roots in a finite field $\F_p$ has the closed form $\varphi(p-1)/(p-1) \leq 1/2$. The maximal probability $\varphi(p-1)/(p-1) = 1/2$ occurs on the subset of Fermat primes 
\begin{equation}
\mathcal{F}=\{p=2^{2^n}+1: n \geq 0\}=\{3,5,17,257, 65537, \ldots \}. 
\end{equation}
This is followed by the subset of Germain primes
\begin{equation}
\mathcal{S}=\{p=2^aq+1: q \geq  2 \text{ is prime, and } a \geq 1 \}=\{5,7,11, 13, 23, 29, \ldots \}, 
\end{equation}
which has $\varphi(p-1)/p =(1/2)(1-1/q)$, et cetera. Some basic questions such as the sizes of these subsets of primes are open problems. In contrast, the minimal probabilities occur on the various subsets of primes with highly composite totients $p-1$. For example, the subset 
\begin{equation}
\mathcal{R}=\{p \geq 2: p-1=2^{v_2}\cdot 3^{v_3}\cdot 5^{v_5}\cdots q^{v_q}, \text{ and } v_i \geq 1\}=\{3,7,31, 191, \ldots \}. 
\end{equation}
In these cases, the probability function can have a complicated expression such as
\begin{equation} \label{eq222.8}
\frac{\varphi(p-1)}{p-1}\asymp\prod_{q \ll \log p}\left( 1- \frac{1}{q} \right) 
=\frac{1}{e^{\gamma} \log \log p}+\Omega_{\pm} \left (\frac{\log \log \log \log p}{(\log p)^{1/2}} \right ).
\end{equation}
This is derived from the standard results in \hyperlink{lem533.01}{Lemma} \ref{lem533.01}, and in \hyperlink{lem533.21}{Lemma} \ref{lem533.21}. Further, the average probability over all the primes $p \leq x$ is a well known constant
\begin{equation} \label{eq222.21}
a_0=\frac{1}{\pi(x)} \sum_{p \leq x}\frac{\varphi(p-1)}{p-1}=\prod_{p >2}\left( 1- \frac{1}{p(p-1)} \right) +o(1)= 0.3739558136 \ldots.
\end{equation}
The analysis of the average appears in \cite{SP69}, and an early numerical calculations is given in \cite{WJ61}. The distribution of primitive root for highly composite totients $p-1$ is approximately a Poisson distribution with parameter $\lambda>0$. For $k \geq 0$, and $1 \leq t \leq \delta \log \log p$, with $\delta >0$, the probability function has the asymptotic formula
\begin{equation} \label{eq222.3c3}
P_k(t) \sim e^{-\lambda} \frac{\lambda^k}{k!},
\end{equation}
confer {\color{red}\cite[Theorem 2]{CZ98}} for the finer details.

\subsection{Average Gap Between Primitive Roots}	
Let $p\geq2 $ be a prime, and let $g_1, g_2, \ldots, g_t$ be the sequence of primitive roots in increasing order, with $t=\varphi(p-1)$. Given a fixed prime $p\geq 2$, the average gap between a pair of consecutive primitive roots is defined by 
\begin{equation} \label{eq222.33f}
d_n=g_{n+1}-g_n=\frac{p-1}{\varphi(p-1)} \ll \log \log p.
\end{equation}
 \begin{lem} \label{lem222.41}\hypertarget{lem222.44} Let \(x\geq 1\) be a large number, then the average gap between consecutive primitive roots over all the primes $p \leq x$ is bounded by a constant. In particular, for any constant $c>2$,
\begin{equation} \label{eq222.33j}
\overline{d_n}=\prod_{p \geq 2}\left( 1- \frac{1}{(p-1)^2} \right) \li(x)+  O\left (\frac{x}{\log^{c-1} x}\right ).\nonumber
\end{equation}
\end{lem}

\begin{proof}[\textbf{Proof}] The identity $n/\varphi(n)=\sum_{d\mid n}\mu^2(d)/\varphi(d)$ is used here to compute the average over all the primes $p \leq x$:
\begin{eqnarray} \label{eq222.74}
\sum_{p \leq x}\frac{p-1}{\varphi(p-1)} &=&\sum_{p \leq x} \sum_{d\mid p-1}\frac{\mu^2(d)}{\varphi(d)} \\
&=& \sum_{d\leq x}\frac{\mu^2(d)}{\varphi(d)}\sum_{\substack{p \leq x\\ p \equiv 1 \bmod d}}  1\nonumber.
\end{eqnarray}
To apply the prime number theorem to the inner sum, use a dyadic partition

\begin{equation} \label{eq222.76}
 \sum_{d\leq x}\frac{\mu^2(d)}{\varphi(d)}\sum_{\substack{p \leq x\\ p \equiv 1 \bmod d}}  1= \sum_{d\leq \log^c x}\frac{\mu^2(d)}{\varphi(d)}\sum_{\substack{p \leq x\\ p \equiv 1 \bmod d}}  1+ \sum_{d\geq \log^cx}\frac{\mu^2(d)}{\varphi(d)}\sum_{\substack{p \leq x\\ p \equiv 1 \bmod d}}  1,
\end{equation}
where $c>0$ is an arbitrary constant. The first sum has the asymptotic expression
\begin{eqnarray} \label{eq222.78}
 \sum_{d\leq \log^C x}\frac{\mu^2(d)}{\varphi(d)}\sum_{\substack{p \leq x\\ p \equiv 1 \bmod d}}  1 
&=& \sum_{d\leq \log^C x}\frac{\mu^2(d)}{\varphi(d)}        \left(  \frac{\li(x)}{\varphi(d)}+ O\left (\frac{x}{\log^b x}\right )   \right )  \\
&=&\li(x) \sum_{d\geq 2}\frac{\mu^2(d)}{\varphi(d)^2}+ O\left (\frac{x}{\log^b x}\right )  \nonumber,
\end{eqnarray}
where $b>c+1$. The second sum has the asymptotic expression
\begin{equation} \label{eq222.80}
  \sum_{d\geq \log^Cx}\frac{\mu^2(d)}{\varphi(d)}\sum_{\substack{p \leq x\\ p \equiv 1 \bmod d}}  1\ll \frac{x}{\log^c x} \sum_{d\geq \log^cx}\frac{1}{\varphi(d)}= O\left (\frac{x}{\log^{c-1} x}\right ) ,
\end{equation}
Combining the last two expressions (\ref{eq222.78}) and (\ref{eq222.80}) completes the proof.

\end{proof}
The average gap between consecutive primitive roots is precise the value of the constant
\begin{equation} \label{eq222.82}
\prod_{p \geq 2}\left( 1- \frac{1}{(p-1)^2} \right) =2.82638409425598556075406 \ldots .
\end{equation}
\begin{lem} \label{lem222.47}\hypertarget{lem222.47} Let \(p\geq 1\) be a large prime, and let $t\leq\varphi(p-1)$ be a large number. Then, the  $g_1, g_2, \ldots, g_t$ be the sequence of primitive roots in increasing order are uniformly distributed over the interval $[2, p-2]$.
\end{lem}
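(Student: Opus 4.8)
The plan is to show that the sequence $g_1 < g_2 < \cdots < g_t$ of primitive roots modulo $p$ becomes equidistributed in $[2,p-2]$ by estimating the discrepancy via an incomplete-character-sum / exponential-sum bound applied to the characteristic function $\Psi$. Concretely, for a subinterval $[a,b]\subseteq[2,p-2]$ I would count $A(a,b)=\#\{n\in[a,b]:\ord_p(n)=p-1\}=\sum_{a\le n\le b}\Psi(n)$ and show $A(a,b)=\frac{\varphi(p-1)}{p-1}(b-a)+O(p^{1/2+o(1)})$, which is the required equidistribution statement once $b-a\gg p^{1/2+\varepsilon}$, and in particular over the whole interval. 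The first step is to insert the divisors-dependent representation from Lemma~\ref{lem333.2}, writing
\begin{equation}
A(a,b)=\frac{\varphi(p-1)}{p-1}\sum_{d\mid p-1}\frac{\mu(d)}{\varphi(d)}\sum_{\ord(\chi)=d}\ \sum_{a\le n\le b}\chi(n).
\end{equation}
The $d=1$ term contributes the main term $\frac{\varphi(p-1)}{p-1}(b-a)+O(1)$, and everything else must be shown to be an error term.

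The second step is to bound the incomplete character sums $\sum_{a\le n\le b}\chi(n)$ for nonprincipal $\chi$. Here I would complete the sum in the usual way (expand the indicator of $[a,b]$ in additive characters modulo $p$) and invoke the Pólya--Vinogradov inequality, giving $\left|\sum_{a\le n\le b}\chi(n)\right|\ll \sqrt{p}\log p$ for every nonprincipal $\chi$. Summing over all $\chi$ of order $d$ and then over $d\mid p-1$, the total error is
\begin{equation}
\ll \frac{\varphi(p-1)}{p-1}\sqrt{p}\,(\log p)\sum_{d\mid p-1}\frac{\mu^2(d)}{\varphi(d)}\cdot\varphi(d)
=\frac{\varphi(p-1)}{p-1}\sqrt{p}\,(\log p)\cdot 2^{\omega(p-1)},
\end{equation}
and since $2^{\omega(p-1)}=p^{o(1)}$ by Lemma~\ref{lem533.01}(ii), the error is $O(p^{1/2+o(1)})$. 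Dividing through by $t=\varphi(p-1)$ and comparing with $\frac{b-a}{p}$ shows that the proportion of primitive roots in $[a,b]$ is $\frac{b-a}{p-1}+O\!\big(p^{-1/2+o(1)}\big)$, uniformly in $a,b$; this is precisely the assertion that $g_1,\dots,g_t$ are uniformly distributed over $[2,p-2]$ (in the sense of vanishing discrepancy). One can phrase the conclusion cleanly through the Weyl/Erd\H{o}s--Tur\'an inequality: the discrepancy $D_t$ of the normalized sequence $g_j/p$ satisfies $D_t\ll p^{-1/2+o(1)}\to 0$.

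The main obstacle is the factor $2^{\omega(p-1)}$ generated by summing Pólya--Vinogradov over all characters of squarefree order dividing $p-1$: one must be sure that $2^{\omega(p-1)}=p^{o(1)}$ genuinely suffices, i.e.\ that the target interval length is comfortably larger than $\sqrt{p}\,2^{\omega(p-1)}$, which is fine for the full interval $[2,p-2]$ and for any $b-a\gg p^{1/2+\varepsilon}$ but would fail for genuinely short intervals. A secondary point is bookkeeping: ensuring the $d=1$ term is correctly separated and that the contribution of the (finitely many) excluded endpoints $n\in\{0,1,p-1\}$ and of $n$ not coprime to $p$ is absorbed into $O(1)$. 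If a stronger, fully explicit discrepancy bound is wanted, one could replace Pólya--Vinogradov by a Burgess-type estimate, but that is not needed for the qualitative uniform-distribution statement claimed here.
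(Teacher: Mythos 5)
Your argument is correct, but it takes a genuinely different route from the paper's. The paper's proof is a one-line appeal to the Weyl (Bohl--Weyl) criterion: since $\{g_j\}$ coincides with $\{\tau^n:\gcd(n,p-1)=1\}$, it bounds $\sum_{\gcd(n,p-1)=1}e^{2\pi i b\tau^n/p}\ll p^{1-\varepsilon}$ by Theorem~\ref{thm3.4} and concludes that the normalized Weyl sums are $o(1)$. You instead insert the divisor-dependent indicator of Lemma~\ref{lem333.2}, reduce to incomplete multiplicative character sums over $[a,b]$, and apply P\'olya--Vinogradov, obtaining the explicit count
\begin{equation*}
\#\{a\le n\le b:\ \ord_p(n)=p-1\}=\frac{\varphi(p-1)}{p-1}(b-a)+O\bigl(p^{1/2+o(1)}\bigr),
\end{equation*}
hence a discrepancy bound $D\ll p^{-1/2+o(1)}$ via Erd\H{o}s--Tur\'an; your handling of the $2^{\omega(p-1)}=p^{o(1)}$ factor via Lemma~\ref{lem533.01}(ii) is exactly right. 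What each buys: your version is quantitative and uniform over all subintervals, and it does not depend on the paper's Fourier-kernel machinery of Section~\ref{s4}, only on the classical P\'olya--Vinogradov inequality; the paper's version is shorter but, as written, Theorem~\ref{thm3.4} only controls the \emph{complete} sum over all $\varphi(p-1)$ primitive roots, so the incomplete Weyl sums over the initial segment $g_1,\dots,g_t$ with $t<\varphi(p-1)$ are not actually covered. Be aware that your proof shares the analogous limitation: it establishes equidistribution of the full set of primitive roots (equivalently $t=\varphi(p-1)$), which is the only reading under which the lemma's statement about an initial segment being uniformly distributed over all of $[2,p-2]$ can hold; for genuinely smaller $t$ the first $t$ primitive roots are confined to $[2,g_t]$ and the claim as stated would be false. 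Both routes yield the same strength of result, namely discrepancy $p^{-1/2+o(1)}$.
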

\begin{proof}[\textbf{Proof}] Let $\tau\ne\pm1, v^2$ be a primitive root mod $p$ and consider the sequence of rational approximations 
	\begin{equation}
x_n=\frac{g_n}{p}=\left \{\frac{\tau^n}{p}\right \}\in (0,1),
	\end{equation}
where $n\geq1$, $\gcd(n,p-1)=1$ and $p^{1/2}< x\leq \varphi(p-1)$. Now apply the Bohl-Weyl criterion, followed by \hyperlink{thm3.4}{Theorem} \ref{thm3.4} to obtain
\begin{equation}
\frac{1}{x} \sum_{1\leq n \leq x} e^{i2\pi x_n} =\frac{1}{x} \sum_{1\leq n \leq x} e^{\frac{i2\pi  g_n}{p}} =\frac{1}{x} \sum_{\substack{1\leq n \leq x\\\gcd(n,p-1)=1}} e^{\frac{i2\pi  \tau^n}{p}} =o(1).
\end{equation} 
This proves the uniform distribution, see \cite{BP1909}, \cite{SW1910}, \cite{WH1916} for the earliest references.
\end{proof}

\section{Estimates of Exponential Sums} \label{S4400}\hypertarget{S4400}
This section provides simple estimates for the exponential sums of interest in this analysis. There are two objectives: To  determine an upper bound, proved in Theorem \ref{thm3.4}, and to show that
\begin{equation} \label{eq3.201}
\sum_{\gcd(n,p-1)=1} e^{i2\pi b \tau^n/p} =\sum_{\gcd(n,p-1)=1} e^{i2\pi \tau^n/p}+E(p),
\end{equation}
where $E(p)$ is an error term, this is proved in \hyperlink{lem333.22}{Lemma} \ref{lem333.22}. The proofs of these results are entirely based on established results and elementary techniques. 

\subsection{Incomplete And Complete Exponential Quadratic Sums}
The discrete logarithm 	$\log_{\tau}:\F^{\times}\longrightarrow \F^{\times}$ in a finite field $\F_p$ with respect to a  primitive root $\tau\ne\pm1, v^2$ mod $p$ is defined by
\begin{align}
			u\quad&\longrightarrow\quad \log_{\tau} u.
\end{align}
\begin{lem}   {\normalfont (Gauss sum)} \label{lem333.255}\hypertarget{lem333.255}  Let $p$ be a primes and let $\tau\ne\pm1, v^2$ be a primitive root mod $p$. Let $\chi(t)=e^{i2 \pi t/p} $ and  $\psi(t)=e^{i2\pi  \log_{\tau}t/(p-1)}$ be a pair of additive and multiplicative characters, respectively. Then, the Gaussian sum has the exact upper bound
	\begin{equation} \label{eq333-255}
		\left | \sum_{1 \leq s\leq p-1} \chi(t)\psi(t)\right  | = p^{1/2}.\nonumber
	\end{equation}
\end{lem} 
\begin{proof}[\textbf{Proof}] The proof of this classical result is widely available in the literature.
\end{proof}
\begin{lem}   {\normalfont (Mordell sum)} \label{lem333.27}\hypertarget{lem333.27}  Let $p$ be a large prime and let $\omega=e^{i2 \pi /p} $ and  $f(t)=e^{i2\pi  \tau^t/p}$ respectively. Then, Mordell sum has the upper bound
	\begin{equation} \label{eq3-355}
		\left | \sum_{1 \leq s\leq p-1} \omega^{-ts}e^{\frac{i2\pi b \tau^s}{p}} \right | \leq 3 q^{1/2} \log q.\nonumber
	\end{equation}
\end{lem} 
\begin{proof}[\textbf{Proof}] Set $\omega^{-ts}=e^{\frac{-i2\pi st}{p}}$. Now write
	\begin{eqnarray}\label{eqm333.27k} 
		\sum_{1 \leq s\leq p-1} \omega^{-ts}e^{\frac{i2\pi b \tau^s}{p}}&=&\sum_{1 \leq s\leq p-1} e^{\frac{i2\pi (-st+  b\tau^s)}{p}}  \nonumber \\[.3cm]
		&\leq & 2 p^{1/2} \log p+2p^{1/2}+1 \nonumber \\[.3cm]
		&\leq &3 p^{1/2} \log p, 
	\end{eqnarray}
	this follows from the upper bound of the Mordell sum in the second line of \eqref{eqm333.27k}, see \cite{CC2009}.
\end{proof}
This result states that the finite Fourier transform of the function $f(n)=e^{\frac{i2\pi (n+  \tau^n)}{p}}$ is quite similar to the Gauss sum even though $f(nm)\ne f(m)f(n) $ is not a multiplicative function. A proof for the version with two moduli $p$ and $q$ is given in {\color{red}\cite[Lemma 2]{BS2006}}. 

\subsection{Incomplete and Complete Exponential Sums}
Let $f: \C \longrightarrow \C$ be a function, and let $q \in \N$ be a large integer. The finite Fourier transform 
\begin{equation} \label{eq3.370}
\hat{f}(t)=\frac{1}{q} \sum_{0 \leq s\leq q-1} e^{i \pi st/q}
\end{equation}
and its inverse are used here to derive a summation kernel function, which is almost identical to the Dirichlet kernel.

\begin{dfn} \label{dfn3.23}\hypertarget{dfn3.23} {\normalfont Let $ p$ and $ q $ be primes, and let $\omega=e^{i 2 \pi/q}$, and $\zeta=e^{i 2 \pi/p}$ be roots of unity. The \textit{finite summation kernel} is defined by the finite Fourier transform identity
\begin{equation} \label{eq3.373}
\mathcal{K}(f(n))=\frac{1}{q} \sum_{0 \leq t\leq q-1,}  \sum_{0 \leq s\leq p-1} \omega^{t(n-s)}f(s)=f(n).\nonumber
\end{equation}
} 
\end{dfn}
This simple identity is very effective in computing upper bounds of some exponential sums
\begin{equation}
 \sum_{ n \leq x}  f(n)= \sum_{ n \leq x}  \mathcal{K}(f(n)),
\end{equation}
where $x  \leq p < q$. Two applications are illustrated here.

\begin{thm}  \label{thm3.2}\hypertarget{thm3.2} {\normalfont (\cite{SR73},  \cite{ML72}) }  Let \(p\geq 2\) be a large prime, and let \(\tau \in \mathbb{F}_p\) be an element of large multiplicative order $\ord_p(\tau) \mid p-1$. Then, for any $b \in [1, p-1]$,  and $x\leq p-1$,
\begin{equation}
 \sum_{ n \leq x}  e^{\frac{i2\pi b \tau^{n}}{p}} \ll p^{1/2}  \log p.\nonumber
\end{equation}

\end{thm}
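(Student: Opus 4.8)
The plan is to estimate the incomplete sum by the completion-of-sums method, in the spirit of the finite summation kernel of Definition \ref{dfn3.23}, taking as completion modulus the period $t:=\ord_p(\tau)$ of the summand $f(n)=e^{i2\pi b\tau^n/p}$, a divisor of $p-1$. It suffices to handle the range $x\le t$: this is automatic in every application here, where $\tau$ is a primitive root and $t=p-1\ge x$; and for $x>t$ one first strips off the $\lfloor x/t\rfloor$ complete periods, each contributing the complete subgroup sum $\sum_{y\in\langle\tau\rangle}e^{i2\pi by/p}$, the large-order hypothesis ensuring that this piece stays within the claimed bound, while the remaining tail has length $<t$.

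First I would complete the sum. For a $t$-periodic $f$ one has
\begin{equation}
\sum_{1\le n\le x}f(n)=\frac1t\sum_{\lambda=0}^{t-1}\left(\sum_{1\le m\le x}e^{-i2\pi\lambda m/t}\right)\left(\sum_{n=0}^{t-1}f(n)\,e^{i2\pi\lambda n/t}\right),
\end{equation}
which is precisely $\sum_{n\le x}\mathcal K(f(n))$ in the notation of Definition \ref{dfn3.23}. Substituting $y=\tau^n$ in the inner complete sum rewrites it as $\sum_{y\in H}\chi_\lambda(y)\,e^{i2\pi by/p}$, where $H=\langle\tau\rangle$ is the cyclic subgroup of order $t$ and $\chi_\lambda(y)=e^{i2\pi\lambda\,\ind_\tau(y)/t}$ is a multiplicative character of $H$. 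So the whole problem collapses to a single estimate: a bound, uniform in $\lambda$ and in $b\in[1,p-1]$, for this \emph{Gauss sum over the subgroup} $H$.

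That uniform bound is the crux of the proof, and it follows from the classical evaluation of Gauss sums. Extend $\chi_\lambda$ to a character $\chi$ of $\F_p^{\times}$ and insert $\mathbf 1_H(y)=\frac{t}{p-1}\sum_{\psi|_H=1}\psi(y)$, the sum running over the $(p-1)/t$ characters $\psi$ of $\F_p^{\times}$ trivial on $H$; this turns $\sum_{y\in H}\chi_\lambda(y)e^{i2\pi by/p}$ into $\frac{t}{p-1}$ times a sum of $(p-1)/t$ ordinary Gauss sums $G_b(\psi\chi)$, $G_b(\eta):=\sum_{y\in\F_p^{\times}}\eta(y)e^{i2\pi by/p}$. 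Since $|G_b(\eta)|=\sqrt p$ for $\eta$ nonprincipal, $G_b$ of the principal character equals $-1$, and at most one $\psi$ makes $\psi\chi$ principal, the factor $t/(p-1)$ is exactly absorbed and one gets $\bigl|\sum_{y\in H}\chi_\lambda(y)e^{i2\pi by/p}\bigr|\le\sqrt p$ for every $\lambda$. Feeding this into the completion identity and using the standard Dirichlet-kernel estimate $\sum_{\lambda=0}^{t-1}\bigl|\sum_{m\le x}e^{-i2\pi\lambda m/t}\bigr|\le x+\sum_{\lambda=1}^{t-1}\tfrac1{2\lVert\lambda/t\rVert}\ll x+t\log t\ll t\log t$ (as $x\le t$) gives
\begin{equation}
\left|\sum_{n\le x}e^{i2\pi b\tau^n/p}\right|\le\frac{\sqrt p}{t}\sum_{\lambda=0}^{t-1}\left|\sum_{1\le m\le x}e^{-i2\pi\lambda m/t}\right|\ll\sqrt p\,\log t\le\sqrt p\,\log p,
\end{equation}
which is the assertion.

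The main obstacle is precisely the uniform $\le\sqrt p$ bound for the twisted subgroup sum: the trivial triangle inequality only gives $|H|=t$, which is useless when $\tau$ has moderate order, so one genuinely needs the exact modulus $\sqrt p$ of a nonprincipal Gauss sum together with the orthogonality bookkeeping that makes the number of Gauss sums, $(p-1)/t$, cancel the prefactor $t/(p-1)$. The rest — the reduction to one period, the isolation of the principal-character term, and the elementary kernel sum — is routine.
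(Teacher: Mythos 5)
Your proof is correct on the range that actually matters ($x\le t$ with $t=\ord_p(\tau)$, and in particular in the primitive-root case $t=p-1$ that the rest of the paper uses), but it takes a genuinely different route from the paper's. The paper completes the sum to an auxiliary prime modulus $q=p+o(p)>p$ via the kernel of Definition \ref{dfn3.23} and then bounds the resulting inner sum $\sum_{1\le s\le p-1}\omega^{-ts}e^{i2\pi b\tau^{s}/p}$ by $2q^{1/2}\log q$ by appeal to Lemma \ref{lem333.27}; that inner sum is a hybrid of an additive character mod $q$ with the exponential of $\tau^{s}$ mod $p$ --- an object of essentially the same kind as the sum being estimated --- so the whole weight of the paper's argument rests on that lemma, and its final computation lands on $p^{1/2}\log^{2}p$ rather than the stated $p^{1/2}\log p$. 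You instead complete modulo the true period $t$ of the summand, which turns the complete sums into multiplicatively twisted subgroup sums $\sum_{y\in H}\chi_\lambda(y)e^{i2\pi by/p}$, and you control these by the exact modulus $\sqrt p$ of nonprincipal Gauss sums together with the orthogonality bookkeeping for the $(p-1)/t$ characters trivial on $H$; this is the classical, self-contained completion argument and it recovers the cleaner bound $\sqrt p\,\log t\le\sqrt p\,\log p$. One caveat you should make explicit: your treatment of $x>t$ by stripping $\lfloor x/t\rfloor$ complete periods only stays within the claimed bound if ``large order'' is read quantitatively as $t\gg p/\log p$, since each complete subgroup sum can genuinely be of size up to $\sqrt p$ (and is typically of size about $\sqrt t$), so for small $t$ the statement with $x$ as large as $p-1$ would fail; restricting to $x\le t$, as you do, is the honest form of the theorem.
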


\begin{proof}[\textbf{Proof}] Let $q=p$ be a large prime, and let $f(n)=e^{i 2 \pi b\tau^{n} /p}$, where $\tau$ is a primitive root modulo $p$. Applying the finite summation kernel in \hyperlink{dfn3.23}{Definition} \ref{dfn3.23}, yields
\begin{equation} \label{eq3.372}
\sum_{ n \leq x}  e^{i2\pi b \tau^{n}/p}= \sum_{ n \leq x}\frac{1}{p} \sum_{0 \leq t\leq p-1,}  \sum_{1 \leq s\leq p-1} \omega^{t(n-s)}e^{i2\pi b \tau^{s}/p} .
\end{equation}
The term $t=0$ contributes $-x/p$, and rearranging it yield
\begin{eqnarray} \label{eq3.374}
\sum_{ n \leq x}  e^{\frac{i2\pi b \tau^{n}}{p}}
&=&\frac{1}{q} \sum_{ n \leq x,} \sum_{1 \leq t\leq p-1,}  \sum_{1 \leq s\leq p-1} \omega^{t(n-s)}e^{\frac{i2\pi b \tau^{s}}{p}}-\frac{x}{q} \\[.3cm]
&=&\frac{1}{p}  \sum_{1 \leq t\leq p-1}  \left (\sum_{1 \leq s\leq p-1} \omega^{-ts}e^{\frac{i2\pi b \tau^{s}}{p}} \right ) \left (\sum_{ n \leq x}\omega^{tn} \right )-\frac{x}{p}\nonumber.
\end{eqnarray}
Taking absolute value, and applying \hyperlink{lem333.20}{Lemma} \ref{lem333.20}, and \hyperlink{lem333.27}{Lemma} \ref{lem333.27}, yield
\begin{eqnarray} \label{eq3.376}
\left | \sum_{ n \leq x}  e^{\frac{i2\pi b \tau^{n}}{p}} \right |
&\leq&\frac{1}{p}  \sum_{1 \leq t\leq p-1} \left | \sum_{0 \leq s\leq p-1} \omega^{-ts}e^{\frac{i2\pi b \tau^{s}}{p}} \right | \cdot  \left | \sum_{ n \leq x}\omega^{tn} \right |+ \frac{x}{p}\nonumber \\[.3cm]
&\ll&\frac{1}{p}  \sum_{1 \leq t\leq p-1} \left ( 2p^{1/2} \log p \right ) \cdot  \left ( \frac{2p}{\pi t} \right )+\frac{x}{p}\\[.3cm]
&\ll& p^{1/2} \log^2 p\nonumber .
\end{eqnarray}
The last summation in (\ref{eq3.376}) uses the estimate 
\begin{equation} \label{eq3.392} 
\sum_{1 \leq t\leq p-1}\frac{1}{t}\ll \log p,
\end{equation} 
and $x/p\leq 1$.
\end{proof}
This appears to be the best possible upper bound. The above proof generalizes the sum of resolvents method used in \cite{ML72}. Here, it is reformulated as a finite Fourier transform method, which is applicable to a wide range of functions. A similar upper bound for composite moduli $p=m$ is also proved, [op. cit., equation (2.29)]. 

\begin{thm}  \label{thm3.4}\hypertarget{thm3.4}  Let \(p\geq 2\) be a large prime, and let $\tau $ be a primitive root modulo $p$. Then,
\begin{equation}
	 \sum_{\substack{1\leq n\leq p-1\\\gcd(n,p-1)=1}} e^{i2\pi b \tau^n/p} \ll  p^{1-\varepsilon} \nonumber
\end{equation} 
for any $b \in [1, p-1]$, and any arbitrary small number $\varepsilon \in 
(0, 1/2)$. 
\end{thm}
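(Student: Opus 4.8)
The plan is to strip the coprimality condition $\gcd(n,p-1)=1$ by M\"obius inversion, thereby expressing the sum as a short $\Z$-linear combination of complete exponential sums over cyclic subgroups of $\F_p^{\times}$, and then to control each such subgroup sum by Theorem \ref{thm3.2}.

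First I would use $\sum_{d\mid\gcd(n,p-1)}\mu(d)=\mathbf{1}_{\{\gcd(n,p-1)=1\}}$ and interchange the order of summation:
\begin{equation*}
\sum_{\gcd(n,p-1)=1}e^{i2\pi b\tau^{n}/p}=\sum_{d\mid p-1}\mu(d)\sum_{\substack{1\le n\le p-1\\ d\mid n}}e^{i2\pi b\tau^{n}/p}=\sum_{d\mid p-1}\mu(d)\,S_{d},
\end{equation*}
where, after the substitution $n=dm$ and writing $\tau_{d}=\tau^{d}$,
\begin{equation*}
S_{d}=\sum_{1\le m\le (p-1)/d}e^{i2\pi b\tau_{d}^{m}/p}.
\end{equation*}
Since $d\mid p-1$ and $\ord_{p}(\tau)=p-1$, the element $\tau_{d}$ has multiplicative order exactly $(p-1)/d$, so $S_{d}$ runs once over the cyclic subgroup $\langle\tau_{d}\rangle\subseteq\F_{p}^{\times}$ of order $(p-1)/d$.

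Next I would bound $S_{d}$ uniformly in $d\mid p-1$ by splitting at $d=p^{1/2}$. If $d\le p^{1/2}$, then $\tau_{d}$ has large order $(p-1)/d\gg p^{1/2}$, and Theorem \ref{thm3.2}, applied with the element $\tau_{d}$, the same $b\in[1,p-1]$, and $x=(p-1)/d\le p-1$, yields $S_{d}\ll p^{1/2}\log p$. If $d>p^{1/2}$, the trivial bound already gives $|S_{d}|\le (p-1)/d<p^{1/2}$. In either case $S_{d}\ll p^{1/2}\log p$, whence
\begin{equation*}
\left|\sum_{\gcd(n,p-1)=1}e^{i2\pi b\tau^{n}/p}\right|\le\sum_{d\mid p-1}|S_{d}|\ll 2^{\omega(p-1)}\,p^{1/2}\log p.
\end{equation*}
By Lemma \ref{lem533.01}(ii), $\omega(p-1)\ll\log p/\log\log p$, so $2^{\omega(p-1)}=\exp\!\big(O(\log p/\log\log p)\big)\ll p^{\delta}$ for every fixed $\delta>0$ once $p$ is large; hence the left-hand side is $\ll p^{1/2+o(1)}\ll p^{1-\varepsilon}$ for every fixed $\varepsilon\in(0,1/2)$ and all sufficiently large $p$, which is the assertion.

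The argument is essentially routine once Theorem \ref{thm3.2} is in hand. The one place that genuinely needs care is that Theorem \ref{thm3.2} is only available in the regime where $\tau_{d}$ has large multiplicative order, so the divisors $d$ close to $p-1$ — for which $\langle\tau_{d}\rangle$ is a small subgroup — must be disposed of separately; the trivial estimate suffices there precisely because such a subgroup has fewer than $p^{1/2}$ elements, matching the $p^{1/2}\log p$ bound from the large-order range. A secondary, purely bookkeeping point is that the entire gain beyond $p^{1/2}$ comes from the divisor count $2^{\omega(p-1)}$ being $p^{o(1)}$ (Lemma \ref{lem533.01}); in particular the method actually delivers $p^{1/2+o(1)}$, so the stated exponent $1-\varepsilon$ with $\varepsilon<1/2$ is met with substantial room to spare.
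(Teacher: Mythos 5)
Your proof is correct, but it takes a genuinely different route from the paper. The paper completes the sum $\sum_{\gcd(n,p-1)=1}e^{i2\pi b\tau^{n}/p}$ in "frequency space" modulo an auxiliary prime $q=p+o(p)$ via the finite summation kernel of Definition \ref{dfn3.23}, and then multiplies the hybrid Gauss-sum bound of Lemma \ref{lem333.27} ($\ll q^{1/2}\log q$) against the coprime-kernel bound of Lemma \ref{lem333.24} ($\ll q\log\log p/t$), summing over $t$ to land on $p^{1/2}\log^{3}p$; the M\"obius factor enters only inside Lemma \ref{lem333.24}, where the $1/d$ decay keeps the divisor loss down to $\sum_{d\mid p-1}d^{-1}\ll\log\log p$. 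You instead apply M\"obius inversion at the outset and reduce directly to sums $S_{d}$ over the cyclic subgroups $\langle\tau^{d}\rangle$, invoking Theorem \ref{thm3.2} as a black box for $d\le p^{1/2}$ and the trivial bound for the small subgroups $d>p^{1/2}$ --- that split is exactly the right care point, since Theorem \ref{thm3.2} is only meaningful for elements of large order. The trade-off: your argument is more elementary and self-contained (no auxiliary modulus $q$, no kernel identity), but the divisor count costs you a factor $2^{\omega(p-1)}=p^{o(1)}$ rather than the paper's $\log\log p$, so you obtain $p^{1/2+o(1)}$ where the paper nominally gets $p^{1/2}\log^{3}p$; both comfortably imply the stated $p^{1-\varepsilon}$ for any fixed $\varepsilon\in(0,1/2)$. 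One cosmetic remark: in your final display the bound $\sum_{d\mid p-1}|S_{d}|$ should carry the factor $|\mu(d)|$ (or be restricted to squarefree $d$) for the count $2^{\omega(p-1)}$ to be the literally correct one, though even the full divisor count $d(p-1)$ is $p^{o(1)}$, so nothing is lost.
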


\begin{proof}[\textbf{Proof}] Let $f(n)=e^{i 2 \pi b\tau^{n} /p}$ and consider the representation
\begin{equation} \label{eq3.392b}
 \sum_{\substack{1\leq n\leq p-1\\\gcd(n,p-1)=1}}  e^{\frac{i2\pi b \tau^n}{p}}= \sum_{ \gcd(n,p-1)=1}\frac{1}{p} \sum_{0 \leq t\leq p-1,}  \sum_{1 \leq s\leq p-1} \omega^{t(n-s)}e^{\frac{i2\pi b \tau^s}{p}} ,
\end{equation}
see \hyperlink{dfn3.23}{Definition} \ref{dfn3.23}. Use the inclusion exclusion principle to rewrite the exponential sum as
\begin{equation}\label{eq3.346}
 \sum_{\substack{1\leq n\leq p-1\\\gcd(n,p-1)=1}} e^{ \frac{i2\pi b \tau^n}{p}} 
= \sum_{ n \leq p-1}\frac{1}{p} \sum_{0 \leq t\leq p-1,}  \sum_{1 \leq s\leq p-1} \omega^{t(n-s)}e^{\frac{i2\pi b \tau^s}{p}} \sum_{\substack{d \mid p-1 \\ d \mid n}}\mu(d)   .
\end{equation} 
The term $t=0$ contributes $-\varphi(p)/p$, and rearranging it yield
\begin{eqnarray}\label{eq3.348}
&& \sum_{\substack{1\leq n\leq p-1\\\gcd(n,p-1)=1}} e^{ \frac{i2\pi b \tau^n}{p}} \\
&=& \sum_{ n \leq p-1}\frac{1}{p} \sum_{1 \leq t\leq p-1,}  \sum_{1 \leq s\leq p-1} \omega^{t(n-s)}e^{\frac{i2\pi b \tau^s}{p}} \sum_{\substack{d \mid p-1 \\ d \mid n}}\mu(d) -\frac{\varphi(p)}{p} \nonumber \\[.3cm]
&=&\frac{1}{p} \sum_{1 \leq t\leq p-1} \left ( \sum_{1 \leq s\leq p-1} \omega^{-ts}e^{\frac{i2\pi b \tau^s}{p}}\right )\left (\sum_{d \mid p-1} \mu(d) \sum_{\substack{n \leq p-1, \\ d \mid n}}   \omega^{tn} \right ) -\frac{\varphi(p)}{p} \nonumber.
\end{eqnarray} 
Taking absolute value, and applying \hyperlink{lem333.24}{Lemma} \ref{lem333.24}, and \hyperlink{lem333.27}{Lemma} \ref{lem333.27}, yield
\begin{eqnarray} \label{eq3.379}
&& \left | \sum_{\substack{1\leq n\leq p-1\\\gcd(n,p-1)=1}} e^{\frac{i2\pi b \tau^n}{p}} \right | \\[.3cm]
&\leq&\frac{1}{p}  \sum_{1 \leq t\leq p-1} \left | \sum_{1 \leq s\leq p-1} \omega^{-ts}e^{i2\pi b \tau^{s}/p} \right | \cdot  \left |\sum_{d \mid p-1} \mu(d) \sum_{\substack{n \leq p-1, \\ d \mid n}}   \omega^{tn} \right | +\frac{\varphi(p)}{p}\nonumber \\[.3cm]
&\ll&\frac{1}{p}  \sum_{1 \leq t\leq p-1} \left ( 2p^{1/2} \log p \right ) \cdot  \left ( \frac{4p^{1+\delta}\log \log p}{\pi t} \right )+\frac{\varphi(p)}{p}\nonumber\\[.4cm]
&\ll& p^{1/2+\delta} (\log p)^2 \nonumber.
\end{eqnarray}
The last summation in (\ref{eq3.379}) uses the estimate 
\begin{equation} \label{eq3.392f} \sum_{1 \leq t\leq p-1}\frac{1}{t}\ll \log p\ll \log p
\end{equation} since $\varphi(p)/p \leq 1$. This is restated in the simpler notation $p^{1/2+\delta}(\log p)^2  \leq p^{1-\varepsilon}$ for any arbitrary small number $\varepsilon \in (0,1/2)$. 
\end{proof}

The upper bound given in \hyperlink{thm3.4}{Theorem} \ref{thm3.4} seems to be optimum. A different proof, which has a weaker upper bound, appears in {\color{red}\cite[Theorem 6]{FS00}}, and related results are given in \cite{CC2009}, \cite{FS01}, \cite{GZ05}, and {\color{red}\cite[Theorem 1]{GK05}}.

\subsection{Equivalent Exponential Sums} 
For any fixed $ 0 \ne b \in \mathbb{F}_p$, the map $ \tau^n \longrightarrow b \tau^n$ is one-to-one in $\mathbb{F}_p$. Consequently, the subsets 
\begin{equation} \label{eq3.220}
 \{ \tau^n: \gcd(n,p-1)=1 \}\quad \text { and } \quad  \{ b\tau^n: \gcd(n,p-1)=1 \} \subset \mathbb{F}_p
\end{equation} have the same cardinalities. As a direct consequence the exponential sums 
\begin{equation} \label{3.330}
\sum_{\gcd(n,p-1)=1} e^{i2\pi b \tau^n/p} \quad \text{ and } \quad \sum_{\gcd(n,p-1)=1} e^{i2\pi \tau^n/p},
\end{equation}
have the same upper bound up to an error term. An asymptotic relation for the exponential sums (\ref{3.330}) is provided in \hyperlink{lem333.22}{Lemma} \ref{lem333.22}. This result expresses the first exponential sum in (\ref{3.330}) as a sum of simpler exponential sum and an error term. 
\begin{lem}   \label{lem333.22}\hypertarget{lem333.22}  Let \(p\geq 2\) be a large primes. If $\tau $ be a primitive root modulo $p$, then,
\begin{equation}  \sum_{\substack{1\leq n\leq p-1\\\gcd(n,p-1)=1}} e^{i2\pi b \tau^n/p} =  \sum_{\substack{1\leq n\leq p-1\\\gcd(n,p-1)=1}} e^{i2\pi  \tau^n/p} + O\left (p^{1/2+\delta} (\log p)^2\right),\nonumber
\end{equation} 
where $\delta>0$ and the implied constant depends only on $p$ and uniformly for $ b \in [1, p-1]$. 	
\end{lem} 
\begin{proof}[\textbf{Proof}] For $b\ne 1$, the exponential sum has the representation 
\begin{eqnarray} \label{eq3.320}
&&  \sum_{\substack{1\leq n\leq p-1\\\gcd(n,p-1)=1}} e^{\frac{i2\pi b \tau^n}{p}} \\
&=&\frac{1}{p} \sum_{1 \leq t\leq p-1} \left ( \sum_{1 \leq s\leq p-1} \omega^{-ts}e^{\frac{i2\pi b \tau^s}{p}}\right )\left (\sum_{d \mid p-1} \mu(d) \sum_{\substack{n \leq p-1, \\ d \mid n}}   \omega^{tn} \right ) -\frac{\varphi(p)}{q}\nonumber,
\end{eqnarray} 
confer equation (\ref{eq3.348}) for details. Moreover, for $b=1$, 
\begin{eqnarray} \label{eq3.321}
&&  \sum_{\substack{1\leq n\leq p-1\\\gcd(n,p-1)=1}} e^{\frac{i2\pi  \tau^n}{p}} \\
&=& \frac{1}{p} \sum_{1 \leq t\leq p-1} \left ( \sum_{1 \leq s\leq p-1} \omega^{-ts}e^{\frac{i2\pi  \tau^s}{p}}\right )\left (\sum_{d \mid p-1} \mu(d) \sum_{\substack{n \leq p-1, \\ d \mid n}}   \omega^{tn} \right ) -\frac{\varphi(p)}{p}\nonumber,
\end{eqnarray}
respectively, see (\ref{eq3.348}). Differencing (\ref{eq3.320}) and (\ref{eq3.321}) produces 
\begin{eqnarray} \label{eq3.90}
& &	 \sum_{\substack{1\leq n\leq p-1\\\gcd(n,p-1)=1}} e^{\frac{i2\pi b \tau^n}{p}} - \sum_{\substack{1\leq n\leq p-1\\\gcd(n,p-1)=1}} e^{\frac{i2\pi  \tau^n}{p}} \\[.3cm]
&=&     \frac{1}{p} \sum_{0 \leq t\leq p-1} \left ( \sum_{1 \leq s\leq p-1} \omega^{-ts}e^{\frac{i2\pi b \tau^s}{p}}-\sum_{1 \leq s\leq p-1} \omega^{-ts}e^{\frac{i2\pi  \tau^s}{p}}\right ) \nonumber \\[.3cm]
&&\hskip 2.5 in   \times\left (\sum_{d \mid p-1} \mu(d) \sum_{\substack{n \leq p-1, \\ d \mid n}}   \omega^{tn} \right ) \nonumber.
\end{eqnarray}
By  Lemma \ref{lem333.24}, the relatively prime summation kernel is bounded by
\begin{eqnarray} \label{eq3.93}
 \left |\sum_{d \mid p-1} \mu(d) \sum_{\substack{n \leq p-1, \\ d \mid n}}   \omega^{tn} \right | 
= \left | \sum_{\substack{1\leq n\leq p-1\\\gcd(n,p-1)=1}}\omega^{tn} \right | 
\ll   \frac{4 p^{1+\delta}} {\pi t}, 
\end{eqnarray}
where the implied constant depends on $p$. Next, by \hyperlink{lem333.27}{Lemma} \ref{lem333.27}, the difference of two Mordell sums is bounded by
\begin{eqnarray} \label{eq3.95}
 \left | \sum_{1 \leq s\leq p-1} \omega^{-ts}e^{\frac{i2\pi b \tau^s}{p}}-\sum_{1 \leq s\leq p-1} \omega^{-ts}e^{\frac{i2\pi  \tau^s}{p}}\right  |  &\leq &  6p^{1/2} \log p. 
\end{eqnarray}
Taking absolute value in (\ref{eq3.90}) and replacing (\ref{eq3.93}), and (\ref{eq3.95}), return
\begin{eqnarray} \label{388}
&& \left|   \sum_{\substack{1\leq n\leq p-1\\\gcd(n,p-1)=1}} e^{i2\pi b \tau^n/p} -\sum_{\gcd(n,p-1)=1} e^{i2\pi  \tau^n/p} \right|  \\[.3cm]
& \ll &      \frac{1}{p} \sum_{0 \leq t\leq p-1} \left ( 4p^{1/2} \log p \right ) \cdot \left ( \frac{6 p^{1+\delta}} {t} \right ) \nonumber\\[.3cm]
&\ll & p^{1/2+\delta} (\log p)^2  \nonumber,
\end{eqnarray}
where $\delta>0$ is a small real number and the implied constant depends on $p$.
\end{proof}
The same proof works for many other subsets of elements $\mathcal{A} \subset \mathbb{F}_p$. For example, 
\begin{equation}
\sum_{n \in \mathcal{A}} e^{i2\pi b \tau^n/p} = \sum_{n \in \mathcal{A}} e^{i2\pi  \tau^n/p} + O(p^{1/2+\delta} \log^c p), 
\end{equation} 
for some constant $c>0$. 

\subsection{Finite Summation Kernels}
\begin{lem}   \label{lem333.20}\hypertarget{lem333.20}  Let \(p\geq 2\) and $p$ be a large prime. Let $\omega=e^{i2 \pi/p} $ be a $p$th root of unity, and let $t \in [1, p-1]$. Then,
\begin{enumerate}[font=\normalfont, label=(\roman*)]
\item $\displaystyle\sum_{n \leq p-1} \omega^{tn} = \frac{\omega^{t}-\omega^{tp}}{1-\omega^{t}},$ 
    
 \item $\displaystyle \left |  \sum_{n \leq p-1} \omega^{tn}  \right |\leq \frac{2p }{\pi t}.$
 \end{enumerate}

\end{lem}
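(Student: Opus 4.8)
The plan is to evaluate the finite geometric sum directly and then bound the modulus by comparing the chord length $|1-\omega^t|$ to the corresponding arc length. First I would write $\sum_{n\le p-1}\omega^{tn}=\sum_{n=1}^{p-1}\omega^{tn}$ (or starting from $n=0$, depending on the intended index range—I will follow the convention implicit in \eqref{eq3.374}, namely $n$ running over $1\le n\le p-1$) and apply the geometric series identity $\sum_{n=1}^{N}w^{n}=w\,(w^{N}-1)/(w-1)$ with $w=\omega^{t}$ and $N=p-1$. Since $q=p+o(p)>p$ is prime and $1\le t\le p-1<q$, we have $\omega^{t}\ne 1$, so the denominator is nonzero and part (i) follows after simplification: $\sum_{n\le p-1}\omega^{tn}=(\omega^{t}-\omega^{tp})/(1-\omega^{t})$.

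For part (ii), I would take absolute values. The numerator satisfies $|\omega^{t}-\omega^{tp}|\le 2$ trivially (difference of two unit-modulus complex numbers). For the denominator I would use the standard estimate $|1-e^{i\theta}|=2|\sin(\theta/2)|$ with $\theta=2\pi t/q$, so $|1-\omega^{t}|=2\sin(\pi t/q)$, valid because $0<t/q<1$ keeps $\pi t/q\in(0,\pi)$ where the sine is positive. Then invoke the elementary inequality $\sin x\ge (2/\pi)x$ for $x\in[0,\pi/2]$; here $x=\pi t/q$, and since $t\le p-1<q$, in fact $x<\pi$, but for the relevant range $t\le q/2$ we get $\sin(\pi t/q)\ge (2/\pi)(\pi t/q)=2t/q$. (For $q/2<t\le p-1$ one uses $\sin(\pi t/q)=\sin(\pi(q-t)/q)$ and the same bound with $q-t$ in place of $t$, noting $q-t\ge q-(p-1)=o(p)+1\ge 1$; since the paper only needs $t\in[1,p-1]$ and $p-1<q$, the symmetric reduction covers everything, and in the regime actually used—$t$ up to $q-1$ in the outer sums—one can always replace $t$ by $\min(t,q-t)$.) Combining, $|1-\omega^{t}|\ge 4t/q$ in the worst relevant case, hence
\begin{equation}
\left|\sum_{n\le p-1}\omega^{tn}\right|=\frac{|\omega^{t}-\omega^{tp}|}{|1-\omega^{t}|}\le\frac{2}{2\sin(\pi t/q)}\le\frac{2}{(4/\pi)(t/q)}=\frac{\pi q}{2t}\le\frac{2q}{\pi t},
\end{equation}
where the last step is a crude but valid replacement of the constant $\pi/2\approx 1.57$ by $2/\pi\approx 0.64$—wait, that direction is wrong, so I would instead keep the honest constant $\pi q/(2t)$ and note $\pi/2<2$, giving the stated bound $2q/(\pi t)$ only if one is careful; more precisely $\pi q/(2t)\le 2q/(\pi t)$ fails, so I would present the bound as $|\sum|\le \pi q/(2t)$ and observe this is $\le 2q/t$, or simply track constants so that the final displayed constant matches what Theorems~\ref{thm3.2}, \ref{thm3.4} actually consume (those only need $O(q/t)$).

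The main obstacle is purely bookkeeping: getting the constant in part (ii) exactly as stated, and handling the endpoint $t$ near $q$ correctly via the $t\mapsto q-t$ symmetry so that the sine in the denominator is bounded away from zero uniformly. There is no analytic difficulty—the whole lemma is the Dirichlet-kernel estimate $\big|\sum_{n\le N}e^{2\pi i tn/q}\big|\le \tfrac12\csc(\pi t/q)\cdot(\text{factor}\le 2)$—so I would spend the proof's effort only on stating the chord-arc inequality cleanly and on the case split $t\le q/2$ versus $t>q/2$, then quote $\sin x\ge (2/\pi)x$ to land the $q/t$ shape with whatever explicit constant the downstream applications tolerate.
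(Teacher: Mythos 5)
Your approach is the same as the paper's: part (i) by summing the geometric series, part (ii) by writing $|1-\omega^{t}|=2\sin(\pi t/q)$ and bounding the sine from below by a linear function. Two remarks. First, your constant trouble is a substitution slip, not a real obstacle: with $x=\pi t/q$ the inequality $\sin x\ge (2/\pi)x$ gives $\sin(\pi t/q)\ge 2t/q$ (not $(2/\pi)(t/q)$), so the ratio is at most $2/(2\sin(\pi t/q))\le q/(2t)\le 2q/(\pi t)$, and the stated constant is recovered cleanly for $t\le q/2$. Second, the issue you flag for $t>q/2$ is genuine and is \emph{not} addressed in the paper's proof, which simply asserts $2/\sin(\pi t/q)\le 2q/(\pi t)$ using $z/2\le\sin z$ on $(0,\pi/2)$ even though $\pi t/q$ can be arbitrarily close to $\pi$: since $q=p+o(p)$, taking $t=p-1$ makes $q-t=o(p)+1$, the sine as small as $O(1/q)$ when $q-p$ is bounded, and the left side of order $q$, while the claimed right side is $O(1)$. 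So the bound as stated fails near $t=q$ and must be read with $t$ replaced by $\min(t,q-t)$ (equivalently $q\|t/q\|$); the downstream sums in Theorems~\ref{thm3.2} and \ref{thm3.4} over $1\le t\le q-1$ then still produce the $\log q$ factor after the symmetric reduction, so the applications survive, but your insistence on the case split $t\le q/2$ versus $t>q/2$ is the correct way to write the proof and is more careful than the paper's.
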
 

\begin{proof}[\textbf{Proof}] (i) Use the geometric series to compute this simple exponential sum as
	\begin{eqnarray} \label{eq3.340}
	 \sum_{n \leq p-1} \omega^{tn}
	&=& \frac{\omega^{t}-\omega^{tp}}{1-\omega^{t}} \nonumber.
	\end{eqnarray} 
(ii) Observe that the parameters $p$ is prime, $\omega=e^{i2 \pi/p}$, the integers $t \in [1, p-1]$, and $d \leq p-1$. This data implies that $\pi t/p\ne k \pi $ with $k \in \mathbb{Z}$, so the sine function $\sin(\pi t/p)\ne 0$ is well defined. Using standard manipulations, and $z/2 \leq \sin(z) <z$ for $0<|z|<\pi/2$, the last expression becomes
	\begin{equation}
	\left |\frac{\omega^{t}-\omega^{tp}}{1-\omega^{t}} \right |\leq 	\left | \frac{2}{\sin( \pi t/ p)} \right | 
	\leq \frac{2p}{\pi t}.
	\end{equation}
\end{proof}

\begin{lem}   \label{lem333.24}\hypertarget{lem333.24}  Let \(p\geq 2\) be large a prime, and let $\omega=e^{i2 \pi/p} $ be a $p$th root of unity. Then,
\begin{enumerate}[font=\normalfont, label=(\roman*)]
\item $\displaystyle \sum_{\substack{1\leq n\leq p-1\\\gcd(n,p-1)=1}} \omega^{tn} = \sum_{d \mid p-1} \mu(d) \frac{\omega^{dt}-\omega^{dt((p-1)/d+1)}}{1-\omega^{dt}},$ 
    
 \item $\displaystyle \left |  \sum_{\substack{1\leq n\leq p-1\\\gcd(n,p-1)=1}} \omega^{tn}  \right |\ll \frac{4p^{1+\delta}}{\pi t},$
 \end{enumerate}
where $\delta>0$ is a small number, $\mu(k)$ is the Mobius function, for any fixed pair $d \mid p-1$ and $t \in [1, p-1]$. 
\end{lem}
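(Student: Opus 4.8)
The plan is to derive (i) by inserting the Mobius identity that detects the coprimality condition and then summing a geometric progression, and to deduce (ii) from (i) by the triangle inequality, the elementary sine estimate already used in Lemma \ref{lem333.20}, and the elementary inequality $\sum_{d\mid m}1/d=\sigma(m)/m\leq m/\varphi(m)$ combined with \eqref{eq533.30}.

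For part (i), I would start from the fact that $\sum_{d\mid\gcd(n,p-1)}\mu(d)$ equals $1$ if $\gcd(n,p-1)=1$ and $0$ otherwise. Substituting this and interchanging the order of summation gives
\begin{equation}\label{eqprop1}
\sum_{\gcd(n,p-1)=1}\omega^{tn}=\sum_{n\leq p-1}\omega^{tn}\sum_{\substack{d\mid p-1\\ d\mid n}}\mu(d)=\sum_{d\mid p-1}\mu(d)\sum_{\substack{n\leq p-1\\ d\mid n}}\omega^{tn}.
\end{equation}
Writing $n=dk$ with $1\leq k\leq(p-1)/d$, the inner sum is the geometric progression $\sum_{1\leq k\leq(p-1)/d}\omega^{dtk}$. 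Since $q$ is prime and $1\leq d\leq p-1<q$, $1\leq t\leq p-1<q$, the integer $dt$ is not divisible by $q$, so $\omega^{dt}\neq 1$; the identity $\sum_{1\leq k\leq M}x^{k}=(x-x^{M+1})/(1-x)$ with $x=\omega^{dt}$, $M=(p-1)/d$, then yields exactly the closed form asserted in (i). (As a by-product its numerator equals $\omega^{dt}\bigl(1-\omega^{t(p-1)}\bigr)$.)

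For part (ii), apply the triangle inequality to \eqref{eqprop1}: since $|\mu(d)|\leq 1$ and the numerator has modulus at most $2$, each term is at most $2/|1-\omega^{dt}|$. With $\omega^{dt}=e^{i2\pi dt/q}$ one has $|1-\omega^{dt}|=2|\sin(\pi dt/q)|$, and the bound $\sin z\geq z/2$ on $[0,\pi/2]$ (as in Lemma \ref{lem333.20}(ii)) gives $2/|1-\omega^{dt}|\leq 2q/(\pi dt)$ for each divisor $d$ with $dt\leq q/2$; for the remaining divisors one discards the closed form and bounds the geometric sum $\sum_{1\leq k\leq(p-1)/d}\omega^{dtk}$ trivially by its length $(p-1)/d<2t$. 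Summing the main contribution over the divisors $d\mid p-1$ with $dt\leq q/2$,
\begin{equation}\label{eqprop2}
\sum_{\substack{d\mid p-1\\ dt\leq q/2}}\frac{2q}{\pi dt}\ \leq\ \frac{2q}{\pi t}\sum_{d\mid p-1}\frac{1}{d}\ =\ \frac{2q}{\pi t}\cdot\frac{\sigma(p-1)}{p-1}\ \leq\ \frac{2q}{\pi t}\cdot\frac{p-1}{\varphi(p-1)},
\end{equation}
and since $(p-1)/\varphi(p-1)<2\log\log p$ for large $p$ by \eqref{eq533.30}, this part is at most $4q\log\log p/(\pi t)$; it then remains to check that the large divisors ($dt>q/2$), bounded trivially by $(p-1)/d<2t$ each, contribute no more than this, which gives the asserted estimate.

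The step I expect to be the real obstacle is precisely this large-divisor contribution: for a divisor $d$ of $p-1$ with $dt>q/2$ the residue $dt\bmod q$ can be as small as $1$, so the sine estimate $|1-\omega^{dt}|^{-1}\ll q/(dt)$ is of no use and must be replaced by the trivial length bound on the (now short) geometric sum, after which one has to verify that the divisors of $p-1$ falling in this range are sparse enough not to overwhelm the main term. The other ingredients — the Mobius inversion in (i), the geometric summation, and the divisor-reciprocal bound $\sum_{d\mid p-1}1/d\leq(p-1)/\varphi(p-1)\ll\log\log p$ coming from \eqref{eq533.30} — are routine.
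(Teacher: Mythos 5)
Your part (i) and the ``main'' estimate in part (ii) follow the paper's own proof exactly: M\"obius detection of the coprimality condition, interchange of summation, geometric summation of the inner progression, then the sine bound and $\sum_{d\mid p-1}d^{-1}\leq 2\log\log p$ (the paper invokes this elementary estimate directly rather than routing it through $\sigma(p-1)/(p-1)\le (p-1)/\varphi(p-1)$ and \eqref{eq533.30}, but that is a cosmetic difference). The substantive point is the obstacle you flag at the end: it is a genuine gap, and it is present in the paper's proof as well, where the bound $2/|1-\omega^{dt}|\le 2q/(\pi dt)$ is asserted ``for $1\le d\le p-1$'' even though the inequality $\sin z\ge z/2$ used to derive it holds only for $0<z<\pi/2$, i.e.\ only when $dt<q/2$. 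For $dt>q/2$ one has $|1-\omega^{dt}|=2|\sin(\pi dt/q)|$, which is governed by the distance from $dt/q$ to the nearest integer and can be as small as $2\sin(\pi/q)\asymp 1/q$, so the corresponding term can be of size $\min\{(p-1)/d,\,O(q)\}$ rather than $2q/(\pi dt)$.

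Your proposed repair does not close this gap. Bounding each large-divisor term trivially by its length $(p-1)/d<2t$ and summing over the divisors of $p-1$ with $dt>q/2$ gives a contribution that can be as large as $2t\cdot d(p-1)=t\,p^{o(1)}$, which exceeds the target $4q\log\log p/(\pi t)$ once $t\gg p^{1/2+o(1)}$; nothing forces the divisors of $p-1$ lying in that range to be sparse enough. Worse, the claimed bound itself is suspect in this regime: if $d\mid p-1$ with $d\asymp p^{1/2}$ and $t$ is the inverse of $d$ modulo $q$ (generically of size $\asymp q$ and lying in $[1,p-1]$), then $\omega^{dt}=\omega$ and the single term $\mu(d)\sum_{m\le (p-1)/d}\omega^{dtm}$ has modulus $\asymp p^{1/2}$, while the asserted right-hand side is only $O(\log\log p)$; there is no visible mechanism making the remaining $p^{o(1)}$ terms cancel it. A correct uniform statement has to be phrased in terms of the residues $dt\bmod q$ (or accept a divisor-function loss), so as written neither your argument nor the paper's establishes (ii); part (i) and the restricted estimate for $dt\le q/2$ are fine.
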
 

\begin{proof}[\textbf{Proof}] (i) Use the inclusion exclusion principle to rewrite the exponential sum as
	\begin{eqnarray} \label{eq360-b}
 \sum_{\substack{1\leq n\leq p-1\\\gcd(n,p-1)=1}} \omega^{tn}&=& \sum_{n \leq p-1} \omega^{tn}  \sum_{\substack{d \mid p-1 \\ d \mid n}}\mu(d)  \\[.3cm]
	&=& \sum_{d \mid p-1} \mu(d) \sum_{\substack{n \leq p-1 \\ d \mid n}} \omega^{tn}\nonumber \\[.3cm]
	& =&\sum_{d\mid p-1} \mu(d) \sum_{m \leq (p-1)/ d} \omega^{dtm} \nonumber\\[.3cm]
	&=& \sum_{d \mid p-1} \mu(d) \frac{\omega^{dt}-\omega^{dt((p-1)/d+1)}}{1-\omega^{dt}} \nonumber.
	\end{eqnarray} 
(ii) Observe that the parameters $q=p$ is prime, $\omega=e^{i2 \pi/p}$, the integers $t \in [1, p-1]$, and $d \leq p-1$. This data implies that $\pi dt/p\ne k \pi $ with $k \in \mathbb{Z}$, so the sine function $\sin(\pi dt/p)\ne 0$ is well defined. Next, observe that \eqref{eq360-b} implies that 
\begin{equation} \label{eq360-c}
	\left | \frac{2}{\sin( \pi dt/ p)} \right | 
	\leq \min \left \{ \frac{p-1}{d},\frac{p}{\pi dt}\right \}.
\end{equation}
Now, since $t\geq 1$ is an integer, any partition of the divisors such as $d\leq D=p/2t$ or $d> D=p/2t$ yields
		\begin{equation}\label{eq360-f}
		\left |\frac{\omega^{dt}-\omega^{dtp}}{1-\omega^{dt}} \right |\leq 	\left | \frac{2}{\sin( \pi dt/ p)} \right | 
		\leq \frac{p}{\pi dt}
	\end{equation} 
Thus, the upper bound is
	\begin{eqnarray}
	\left|   \sum_{d \mid p-1} \mu(d) \frac{\omega^{dt}-\omega^{dt((p-1)/d+1)}}{1-\omega^{dt}} \right| 
	&\leq& 2 \sum_{d \mid p-1}\left( \frac{p}{\pi dt}+ \frac{p}{\pi dt}\right)  \\[.3cm]
	&\leq&\frac{4p}{\pi t} \sum_{d \mid p-1} 1 \nonumber  \\[.3cm]
	&\ll& \frac{4p^{1+\delta}}{\pi t} \nonumber,
	\end{eqnarray}
where the implied constant depends on $p$ and the last inequality uses the elementary estimate $ \sum_{d \mid n} 1 \ll p^{\delta}$.
\end{proof}

\section{Upper Bound for the Error Terms}  \label{s899}
The upper bounds for exponential sums over subsets of elements in finite fields $\mathbb{F}_p$ studied in \hyperlink{S4400}{Section} \ref{S4400} are used to estimate the error terms $E(x,y)$ and $E(x,\Lambda)$ in the proofs of \hyperlink{thm1.1}{Theorem} \ref{thm1.1} and \hyperlink{thm1.2}{Theorem} \ref{thm1.2} respectively. 

\subsection{Error Term for Primitive Roots in Short Intervals}
\begin{lem} \label{lem899.06}\hypertarget{lem899.06}  Let \(p\geq 2\) be a large prime, let $N\gg (\log p)(\log\log p)^2$, let \(\psi(z)=e^{i2\pi z/p}\ne 1\) be an additive character, and let \(\tau\) be a primitive root mod \(p\). If $x\geq1$ and the element $u\in [x,x
	+y]=[M,M+N]$ is not a primitive root, then, 
	\begin{equation} \label{el899.00d}
	\frac{1}{p}\sum_{x \leq u\leq x+ y,}
	 \sum_{\substack{1\leq n\leq p-1\\\gcd(n,p-1)=1}} \sum_{ 0<m \leq p-1} \psi \left((\tau ^n-u)m\right)\ll \frac{y }{p^{\varepsilon}} \nonumber
	\end{equation} 
	for all sufficiently large numbers $1 \leq x< x+y\leq p$, and an arbitrarily small number \(\varepsilon >0\).
\end{lem}

\begin{proof}[\textbf{Proof}]  By hypothesis $\tau ^n-u\ne 0$ for  $u\in [x,x
	+y]$, so $\sum_{ 0<m\leq p-1} \psi \left((\tau ^n-u)m\right)= -1$. This implies the trivial upper bound  
\begin{equation}
\left | E(x,y) \right | < \left |-\frac{\varphi(p-1)}{p}(x+y-x)\right | \leq \frac{x+y-x}{2},
\end{equation}
where $ \varphi(p-1)/p\leq 1/2$. To compute a nontrivial error term, rearrange the triple finite sum in the form
\begin{eqnarray} \label{eq899.05}
E(x,y)&=&\frac{1}{p}\sum_{x \leq u \leq x+ y,} \sum_{ 0<m\leq p-1,}   \sum_{\substack{1\leq n\leq p-1\\\gcd(n,p-1)=1}} \psi ((\tau ^n-u)m) \\  
&= & \frac{1}{p}\sum_{x \leq u \leq x+ y} \left (\sum_{ 0<m\leq p-1,} e^{\frac{-i 2 \pi um}{p}} \right ) \left ( \sum_{\substack{1\leq n\leq p-1\\\gcd(n,p-1)=1}} e^{\frac{i 2 \pi m\tau ^n}{p}} \right )\nonumber \\[.3cm]
 &= & \frac{1}{p}\sum_{x \leq u \leq x+y} \left (\sum_{ 0<m\leq p-1,} e^{\frac{-i 2 \pi um}{p}} \right ) \left (  \sum_{\substack{1\leq n\leq p-1\\\gcd(n,p-1)=1}} e^{\frac{i2\pi  \tau^{n}}{p}} + O(p^{1/2+\delta} (\log p)^2) \right )\nonumber \\[.3cm]
 &= & \frac{1}{p}\sum_{x \leq u \leq x+y} U_p \cdot V_p \nonumber,
\end{eqnarray} 
where $\delta>0$ and the implied constant depends only on $p$, this follows from \hyperlink{lem333.22}{Lemma} \ref{lem333.22}. The first exponential sum $U_p$ has the exact evaluation
\begin{equation}\label{eq899.13}
| U_p| = \left |\sum_{ 0<m\leq p-1} e^{\frac{-i 2 \pi um}{p}} \right |=1,
\end{equation} 
where $\sum_{ 0<m\leq p-1} e^{i 2 \pi um/p}=-1$ for any $u \in [x,x+y]$. The second exponential sum $V_p$ has the upper bound
\begin{eqnarray} \label{eq899.15}
|V_p|&=& \left | \sum_{\substack{1\leq n\leq p-1\\\gcd(n,p-1)=1}} e^{\frac{i2 \pi \tau ^n}{p}}+ O\left (p^{1/2+\delta} (\log p)^2  \right ) \right | \\[.3cm]
&\ll &\left | \sum_{\substack{1\leq n\leq p-1\\\gcd(n,p-1)=1}} e^{\frac{i2 \pi \tau ^n}{p}} \right |+p^{1/2+\delta} (\log p)^2 \nonumber \\[.3cm]
&\ll&  p^{1-\varepsilon} \nonumber,
\end{eqnarray} 
where $\delta>0$ and $\varepsilon\in (0,1/2) $ depends only on $p$, see \hyperlink{thm3.4}{Theorem} \ref{thm3.4}. \\

Taking absolute value in (\ref{eq899.05}), and replacing the estimates (\ref{eq899.13}) and (\ref{eq899.15}) return
\begin{eqnarray} \label{el89991}
\frac{1}{p}\sum_{x \leq u \leq x+y}
\left | U_p \cdot V_p \right | 
&\leq & 
\frac{1}{p} \sum_{x \leq u \leq x+ y} \left | U_p \right | \cdot  |V_p|  \\[.3cm]
&\ll &\frac{1}{p} \sum_{x \leq u \leq x+y} 1 \cdot    p^{1-\varepsilon }\nonumber \\[.3cm]
&\ll &  \frac{ 1}{p^{\varepsilon }}\sum_{x \leq u \leq x+ y}  1 \nonumber \\[.3cm]
&\ll & \frac{y} {p^{\varepsilon}}\nonumber,
\end{eqnarray}
These complete the verification.
\end{proof}

\subsection{Error Term for Primitive Roots in Long Intervals}
The results available in the literature for primes in small intervals of the forms $[M,M+N]=[x, x+y]$ with $y=N < x^{1/2}$ are not uniform. In light of this fact, only the error term for 
the simpler intervals $[2,x]$ can be computed effectively.  
\begin{lem} \label{lem899.16}\hypertarget{lem899.16}  Let \(p\geq 2\) be a large prime, let $x\gg p^{0.535}$, let \(\psi(z)=e^{i2\pi z/p}\ne 1\) be an additive character, and let \(\tau\) be a primitive root mod \(p\). If the element $u\in[2,x]$ is not a primitive root, then,  
	\begin{equation} \label{el899.00}
	\frac{1}{p}\sum_{ u\leq x}
	 \sum_{\substack{1\leq n\leq p-1\\\gcd(n,p-1)=1}} \sum_{ 0<m\leq p-1} \psi \left((\tau ^n-u)m\right) \Lambda(u) \ll \frac{x }{p^{\varepsilon}} \nonumber
	\end{equation} 
	for all sufficiently large numbers $x \geq 1$, and an arbitrarily small number \(\varepsilon >0\).
\end{lem}

\begin{proof}[\textbf{Proof}] Same as the previous one.
\end{proof}

\section{Asymptotics for the Main Terms} \label{S999}\hypertarget{S999}
The notation $f(x)\asymp g(x)$ is defined by $af(x)<g(x)<bf(x)$ for some constants $a,b >0$. 

\subsection{Main Term for Primitive Roots in Short Intervals}
The simpler notation $[M,M+N]=[x,x+y]$ is used in the proof below. 
\begin{lem} \label{lem999.76}\hypertarget{lem999.76}  Let \(p\geq 2\) be a large prime, and let $1 \leq x <x+y<p$ be a pair of numbers. Then, 
\begin{equation} \label{el999.38}
 \sum _{x \leq u\leq x+ y} \frac{1}{p} \sum_{\substack{1\leq n\leq p-1\\\gcd(n,p-1)=1}}1\gg \frac{y}{\log \log p}\left (1+O\left((\log \log p) e^{-c_0 \sqrt{ \log \log p}} \right ) \right )  .\nonumber
	\end{equation} 

\end{lem}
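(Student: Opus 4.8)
The inner sum is independent of $u$: as $n$ runs over the integers of $[1,p-1]$ coprime to $p-1$ one has $\sum_{\gcd(n,p-1)=1}1=\varphi(p-1)$. Hence the left-hand side of \eqref{el999.38} equals
\[
\frac{\varphi(p-1)}{p}\sum_{x\leq u\leq y}1=\frac{\varphi(p-1)}{p}\bigl(y-x+O(1)\bigr)\gg\frac{\varphi(p-1)}{p}\,(y-x),
\]
the last step using that $y-x$ is large, so the $O(1)$ from counting the integers in $[x,y]$ is absorbed. Since $\varphi(p-1)/p=(1-1/p)\,\varphi(p-1)/(p-1)$, it suffices to bound $\varphi(p-1)/(p-1)$ from below.

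For this I would use the factorization $\varphi(p-1)/(p-1)=\prod_{q\mid p-1}(1-1/q)$ together with the Mertens-type estimates of Section \ref{s533}. Fix an absolute constant $c_1>0$ and split the product at $c_1\log p$. The factors with $q\leq c_1\log p$ run over a subset of $\{q\leq c_1\log p\}$, so their product is at least $\prod_{q\leq c_1\log p}(1-1/q)$; the factors with $q>c_1\log p$ number at most $\omega(p-1)\ll\log p/\log\log p$ by Lemma \ref{lem533.01}(ii), so their product is at least $\bigl(1-(c_1\log p)^{-1}\bigr)^{O(\log p/\log\log p)}=1+O(1/\log\log p)$. Hence $\varphi(p-1)/(p-1)\gg\prod_{q\leq c_1\log p}(1-1/q)$, which is the relation recorded in \eqref{eq222.8}. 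Applying Lemma \ref{lem533.21}(i) with $x$ replaced by $c_1\log p$ (and using $\log(c_1\log p)=\log\log p+O(1)$) gives
\[
\prod_{q\leq c_1\log p}\Bigl(1-\frac{1}{q}\Bigr)=\frac{1}{e^{\gamma}\log\log p}+O\bigl(e^{-c_0\sqrt{\log\log p}}\bigr)=\frac{1}{e^{\gamma}\log\log p}\bigl(1+O(\eta_p)\bigr),
\]
where $\eta_p=(\log\log p)e^{-c_0\sqrt{\log\log p}}$. Combining this with the first display and absorbing $e^{\gamma}$ into the symbol $\gg$ yields \eqref{el999.38}.

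The only genuine point of care is the reduction from $\prod_{q\mid p-1}(1-1/q)$ to $\prod_{q\leq c_1\log p}(1-1/q)$, i.e.\ verifying that the large prime divisors of $p-1$ --- whose number is controlled by Lemma \ref{lem533.01}(ii) --- cost only a factor $1+o(1)$; everything else is routine bookkeeping. I would also note that the error term displayed in \eqref{el999.38} is merely what Lemma \ref{lem533.21}(i) contributes after inversion, and since $\eta_p\to 0$ the content of the lemma is just that the main term is $\gg(y-x)/\log\log p$. If one prefers to avoid the prime-number-theorem estimate, the explicit Landau bound \eqref{eq533.30} applied with $n=p-1$ gives the same conclusion directly, with $O(1/(\log\log p)^2)$ in place of $O(\eta_p)$.
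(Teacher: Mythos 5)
Your proposal is correct and follows essentially the same route as the paper: reduce the left-hand side to $\frac{\varphi(p-1)}{p}(y-x)$ and bound $\varphi(p-1)/(p-1)=\prod_{q\mid p-1}(1-1/q)$ from below by the Mertens product over primes $q\ll\log p$ via Lemma \ref{lem533.21}(i). The only difference is that you justify the reduction to the truncated product explicitly (splitting at $c_1\log p$ and using Lemma \ref{lem533.01}(ii) for the large prime factors), a step the paper asserts in one line by setting $z\asymp\log p$.
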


\begin{proof}[\textbf{Proof}] The maximal number $\omega(p-1)$ of prime divisors of highly composite totients $p-1$ satisfies $\omega(p-1) \gg \log p/ \log \log p$. This implies that $z \asymp \log p$. An application of \hyperlink{lem533.21}{Lemma} \ref{lem533.21} to the ratio returns
\begin{eqnarray}
\frac{\varphi(p-1)}{p}&=&\frac{p-1}{p} \frac{1}{p-1}\prod_{q \mid p-1}\left( 1- \frac{1}{q} \right)\\[.3cm] 
&\geq& \prod_{q \leq z}\left( 1- \frac{1}{q} \right)\nonumber\\[.3cm] 
&=&\frac{1}{e^{\gamma} \log z}+ O\left (e^{-c_0 \sqrt{ \log z}}\right )\nonumber\\[.3cm] 
&\gg&\frac{1}{e^{\gamma} \log \log p}+ O\left (e^{-c_0 \sqrt{ \log \log p}}\right )\nonumber.
\end{eqnarray}
Substituting this, the main term reduces to

\begin{eqnarray} \label{el999.53}
M(x,y)&=&\sum _{x \leq u\leq x+y} \frac{1}{p} \sum_{\substack{1\leq n\leq p-1\\\gcd(n,p-1)=1}}1 \\[.3cm]
&=& \frac{\varphi(p-1)}{p}\left ( x+y-x \right )\nonumber \\[.3cm]
&\gg& \left ( \frac{1}{e^{\gamma} \log \log p}+ O\left (e^{-c_0 \sqrt{ \log \log p}}\right )\right )\cdot  y   \nonumber .
\end{eqnarray}
The proves the claim.
\end{proof}

\subsection{Main Term for Primitive Roots in Long Intervals}
\begin{lem} \label{lem999.86}\hypertarget{lem999.86}  Let \(p\geq 2\) be a large prime, and let \(x < p \) be a number. Then, 
\begin{equation} \label{el999.48}
 \sum _{u\leq x} \frac{1}{p} \sum_{\substack{1\leq n\leq p-1\\\gcd(n,p-1)=1}}\Lambda(u)\gg \frac{x}{\log \log p}\left (1+ O\left ( \frac{e^{\gamma} \log \log p}{e^{c_0 \sqrt{ \log \log p}}}\right )\right )  \nonumber
	\end{equation} 
for some constant $c_0>0$.
\end{lem}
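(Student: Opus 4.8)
The plan is to follow the argument of Lemma~\ref{lem999.76} almost verbatim, the only change being that the plain characteristic function of the interval $[2,x]$ is replaced by the von Mangoldt weight $\Lambda(u)$, so that the outer sum becomes a Chebyshev-type sum rather than a counting function. First I would observe that the inner sum is independent of $u$, namely $\sum_{\gcd(n,p-1)=1}1=\varphi(p-1)$, so it factors out of the outer summation and the left side of \eqref{el999.48} equals
\[
\frac{\varphi(p-1)}{p}\sum_{u\leq x}\Lambda(u).
\]

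Two standard inputs then finish the estimate. On the one hand, the prime number theorem in the form $\sum_{u\leq x}\Lambda(u)=x+O\!\left(xe^{-c\sqrt{\log x}}\right)$ gives $\sum_{u\leq x}\Lambda(u)\gg x$ for all large $x$. On the other hand, exactly as in Lemma~\ref{lem999.76}, the worst case for the ratio $\varphi(p-1)/p$ occurs for highly composite totients $p-1=2^{v_2}3^{v_3}\cdots q^{v_q}$; Lemma~\ref{lem533.01} bounds the number of prime divisors by $\omega(p-1)\ll \log p/\log\log p$, whence the largest prime divisor satisfies $q\leq z$ with $z\asymp\log p$. Applying Lemma~\ref{lem533.21}(i) to the truncated product over $q\leq z$ then gives
\[
\frac{\varphi(p-1)}{p}\;\geq\;\prod_{q\leq z}\left(1-\frac1q\right)=\frac{1}{e^{\gamma}\log z}+O\!\left(e^{-c_0\sqrt{\log z}}\right)\;\gg\;\frac{1}{e^{\gamma}\log\log p}+O\!\left(e^{-c_0\sqrt{\log\log p}}\right),
\]
since $\log z\asymp\log\log p$.

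Multiplying the two lower bounds, the left side of \eqref{el999.48} would be
\[
\gg\left(\frac{1}{e^{\gamma}\log\log p}+O\!\left(e^{-c_0\sqrt{\log\log p}}\right)\right)x=\frac{x}{e^{\gamma}\log\log p}\left(1+O\!\left(\frac{e^{\gamma}\log\log p}{e^{c_0\sqrt{\log\log p}}}\right)\right),
\]
which is the asserted bound. The only step requiring genuine care is the claim $z\asymp\log p$ for the largest prime factor in the extremal case; I would deduce it by combining the maximal order $\omega(p-1)\ll\log p/\log\log p$ from Lemma~\ref{lem533.01}(ii) with the primorial estimate $\prod_{q\leq t}q=e^{(1+o(1))t}$, which forces the $\omega(p-1)$-th prime to have size $\asymp\log p$. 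Everything else — factoring out $\varphi(p-1)$ and quoting the prime number theorem — is routine bookkeeping, so I do not anticipate any serious obstacle.
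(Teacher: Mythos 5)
Your proposal follows the paper's own proof essentially verbatim: factor out $\varphi(p-1)/p$, lower-bound it via the highly composite worst case and Lemma \ref{lem533.21} with $z \asymp \log p$, and apply the prime number theorem $\sum_{u\leq x}\Lambda(u)=x+O\left(xe^{-c_0\sqrt{\log x}}\right)$ before multiplying the two estimates. The only difference is that you supply an explicit justification (via the primorial estimate) for the claim $z\asymp\log p$, which the paper simply asserts.
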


\begin{proof}[\textbf{Proof}] The maximal number $\omega(p-1)$ of prime divisors of highly composite totients $p-1$ satisfies $\omega(p-1) \gg \log p/ \log \log p$. This implies that $z \asymp \log p$. An application of \hyperlink{lem533.21}{Lemma} \ref{lem533.21} to the ratio returns
\begin{eqnarray}
\frac{\varphi(p-1)}{p}&=&\frac{p-1}{p} \frac{1}{p-1}\prod_{q \mid p-1}\left( 1- \frac{1}{q} \right)\\[.3cm] 
&\geq& \prod_{q \leq z}\left( 1- \frac{1}{q} \right)\nonumber\\[.3cm] 
&=&\frac{1}{e^{\gamma} \log z}+ O\left (e^{-c_0 \sqrt{ \log z}}\right )\nonumber\\[.3cm] 
&\gg&\frac{1}{e^{\gamma} \log \log p}+ O\left (e^{-c_0 \sqrt{ \log \log p}}\right )\nonumber.
\end{eqnarray}
In addition, using the prime number theorem in the form $\sum_{n \leq x}\Lambda(n)=x +O\left (xe^{-c_0 \sqrt{ \log x}}\right )$, the main term reduces to

\begin{eqnarray} \label{el999.66}
M(x,\Lambda)&=&\sum _{u\leq x} \frac{1}{p} \sum_{\substack{1\leq n\leq p-1\\\gcd(n,p-1)=1}}\Lambda(u) \\[.3cm]
&=& \frac{\varphi(p-1)}{p}\sum _{u\leq x}\Lambda(u) \nonumber\\[.3cm]
&=& \frac{\varphi(p-1)}{p}\left ( x+ O\left (xe^{-c_0 \sqrt{ \log x}}\right )\right )\nonumber \\[.3cm]
&\gg& \left ( \frac{1}{e^{\gamma} \log \log p}+ O\left (e^{-c_0 \sqrt{ \log \log p}}\right )\right )\left ( x+ O\left (xe^{-c_0 \sqrt{ \log x}}\right )\right )   \nonumber \\[.3cm]
&\gg& \frac{x}{\log \log p}\left (1+O\left((\log \log p) e^{-c_0 \sqrt{ \log \log p}} \right ) \right ) \left ( 1+ O\left (e^{-c_0 \sqrt{ \log x}}\right )\right ) \nonumber\\[.3cm]
&\gg& \frac{x}{\log \log p}\left (1+ O\left ( \frac{e^{\gamma} \log \log p}{e^{c_0 \sqrt{ \log \log p}}}\right )\right )   \nonumber.
\end{eqnarray}
This proves the claim.
\end{proof}

\subsection{Main Term for Prime Primitive Roots in Short Intervals}
\begin{lem} \label{lem999.96}\hypertarget{lem999.96}  Let \(p\geq 2\) be a large prime, and let $1 \leq p^{.535} <N<p$ be a pair of numbers. Then,for any number $M <p$, 
\begin{equation} \label{el999.39}
 \sum _{M \leq u\leq M+N} \frac{1}{p} \sum_{\substack{1\leq n\leq p-1\\\gcd(n,p-1)=1}}\Lambda(u)\gg \frac{N}{e^{\gamma} \log \log p} \left (1+ O\left ( \frac{e^{\gamma} \log \log p}{e^{c_0 \sqrt{ \log \log p}}}\right )\right ) .\nonumber
	\end{equation} 

\end{lem}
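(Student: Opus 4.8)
The plan is to mirror the structure of Lemma \ref{lem999.86} (long intervals) but with the prime number theorem replaced by its short-interval analogue. First I would compute the inner sum: since $\tau$ ranges over a primitive root and $n$ ranges over residues coprime to $p-1$, one has $\sum_{\gcd(n,p-1)=1}1=\varphi(p-1)$ identically, so the double sum factors as
\begin{equation}
M(x,\Lambda)=\sum_{M \leq u\leq M+N} \frac{1}{p}\sum_{\gcd(n,p-1)=1}\Lambda(u) = \frac{\varphi(p-1)}{p}\sum_{M \leq u\leq M+N}\Lambda(u).
\end{equation}

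Next I would invoke the Baker--Harman--Pintz short-interval prime number theorem, which asserts that for $N \gg x^{.525}$,
\begin{equation}
\sum_{x \leq u\leq x+N}\Lambda(u)= N\left(1+O\left(e^{-c_0\sqrt{\log x}}\right)\right),
\end{equation}
applied here with $x=M<p$; this is exactly the ingredient that forces the hypothesis $N \gg p^{.525}$ (matching Theorem \ref{thm1.3}), and it is legitimate since $M<p$ ensures $\log M \ll \log p$. Then I would substitute the lower bound for the ratio $\varphi(p-1)/p$ established inside the proofs of Lemma \ref{lem999.76} and Lemma \ref{lem999.86}: applying Lemma \ref{lem533.21}(i) to the highly composite case $\omega(p-1)\gg \log p/\log\log p$, so that the relevant product runs over $q \leq z$ with $z \asymp \log p$, gives
\begin{equation}
\frac{\varphi(p-1)}{p}\gg \frac{1}{e^{\gamma}\log\log p}+O\left(e^{-c_0\sqrt{\log\log p}}\right).
\end{equation}

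Finally I would multiply the two estimates together, exactly as in the display chain \eqref{el999.66}, absorbing the $O(e^{-c_0\sqrt{\log M}})$ factor (which is harmless since $\log M$ is comparable to $\log p$) into the dominant error $O\!\left(e^{\gamma}(\log\log p)\,e^{-c_0\sqrt{\log\log p}}\right)$, to arrive at
\begin{equation}
M(M,M+N)\gg \frac{N}{e^{\gamma}\log\log p}\left(1+ O\left(\frac{e^{\gamma}\log\log p}{e^{c_0\sqrt{\log\log p}}}\right)\right),
\end{equation}
which is the claim. The main obstacle is simply ensuring that the short-interval prime-counting estimate is applied with the correct uniformity: one needs $M$ (hence $\log M$) not too large relative to $p$ so that the error $e^{-c_0\sqrt{\log M}}$ is genuinely negligible against the $\log\log p$-scale main term; since the hypothesis only assumes $M<p$, this is fine, but it should be stated carefully. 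The arithmetic combining of error terms is routine once that point is settled.
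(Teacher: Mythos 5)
Your proposal is essentially the paper's own proof: bound $\varphi(p-1)/p \gg 1/(e^{\gamma}\log\log p) + O(e^{-c_0\sqrt{\log\log p}})$ via Lemma \ref{lem533.21} with $z \asymp \log p$, factor the inner sum as $\varphi(p-1)$, apply Baker--Harman--Pintz to the interval $[M, M+N]$ with $N \gg p^{.525}$, and multiply. One caveat: you state BHP as an asymptotic $\sum_{x\le u\le x+N}\Lambda(u) = N(1+O(e^{-c_0\sqrt{\log x}}))$, but for intervals as short as $x^{.525}$ that result only gives a lower bound of the correct order, $\sum \Lambda(u) \gg N$ (asymptotics are known only for longer intervals); since the lemma claims only a $\gg$ bound, this weaker form --- which is what the paper actually invokes --- suffices and your argument goes through unchanged.
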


\begin{proof}[\textbf{Proof}] The maximal number $\omega(p-1)$ of prime divisors of highly composite totients $p-1$ satisfies $\omega(p-1) \gg \log p/ \log \log p$. This implies that $z \asymp \log p$. An application of \hyperlink{lem533.21}{Lemma} \ref{lem533.21} to the ratio returns
\begin{eqnarray}
\frac{\varphi(p-1)}{p}&=&\frac{p-1}{p} \frac{1}{p-1}\prod_{q \mid p-1}\left( 1- \frac{1}{q} \right)\\[.3cm] 
&\geq& \prod_{q \leq z}\left( 1- \frac{1}{q} \right)\nonumber\\[.3cm] 
&=&\frac{1}{e^{\gamma} \log z}+ O\left (e^{-c_0 \sqrt{ \log z}}\right )\nonumber\\[.3cm] 
&\gg&\frac{1}{e^{\gamma} \log \log p}+ O\left (e^{-c_0 \sqrt{ \log \log p}}\right )\nonumber.
\end{eqnarray}
Let $x=M$, and $y=M+N$. Substituting this, the main term reduces to

\begin{eqnarray} \label{el999.51d}
M(x,y,\Lambda)&=&\sum _{x \leq u\leq y} \frac{1}{p} \sum_{\substack{1\leq n\leq p-1\\\gcd(n,p-1)=1}}\Lambda(u)\\[.3cm]
&=& \frac{\varphi(p-1)}{p}\sum _{x \leq u\leq y} \Lambda(u)\nonumber\\[.3cm]
&\gg& \left ( \frac{1}{e^{\gamma} \log \log p}+ O\left (e^{-c_0 \sqrt{ \log \log p}}\right )\right )\sum _{x \leq u\leq y} \Lambda(u)   \nonumber  .
\end{eqnarray}
Applying the prime number theorem in short intervals $\sum _{x \leq n\leq y} \Lambda(n) \gg y-x=N$, see \cite{BP01}, to the last inequality yields
\begin{eqnarray} \label{el999.51g}
M(x,y,\Lambda)&\gg& \left ( \frac{1}{e^{\gamma} \log \log p}+ O\left (e^{-c_0 \sqrt{ \log \log p}}\right )\right )\left ( y-x\right )   \\[.3cm]
&\gg& \frac{N}{e^{\gamma} \log \log p} \left (1+ O\left ( \frac{e^{\gamma} \log \log p}{e^{c_0 \sqrt{ \log \log p}}}\right )\right ) \nonumber .
\end{eqnarray}
The proves the claim.
\end{proof}


\section{Primitive Roots in Short Intervals --- Theorem 1.1} \label{S887}\hypertarget{S887}
The previous sections provide sufficient background materials to assemble the proof of the existence of primitive roots in a short interval $\left [M, M+N \right ]$ for any sufficiently 
large prime $p \geq 2$, a number $N \gg (\log p)^{1+\varepsilon} $, and 
the fixed parameters $ M \geq 2$ and $\varepsilon >0$. \\

The analysis below indicates that the local minima of the ratio $\varphi(p-1)/p$ at the highly composite totients $p-1$ are the primary factor determining the size of the short interval.

\begin{proof}[\textbf{Proof}] (\hyperlink{thm1.1}{Theorem} \ref{thm1.1}) Suppose that the short interval $\left [M, M+N \right ]=[x,x+y]$, with $1 \leq x <y<p$, does not contain a primitive root modulo a large primes \(p\geq 2\), and consider the sum of the characteristic function over the short interval, that is, 
\begin{equation} \label{el887.40}
0=\sum _{x \leq u\leq x+ y} \Psi (u).
\end{equation}
Replacing the characteristic function, \hyperlink{lem333.3}{Lemma} \ref{lem333.3}, and expanding the nonexistence equation (\ref{el887.40}) yield
\begin{eqnarray} \label{el887.50}
0&=&\sum _{x \leq u\leq x+y} \Psi (u)  \\[.3cm]
&=&\sum _{x \leq u\leq x+ y}  \left (\frac{1}{p}\sum_{\substack{1\leq n\leq p-1\\\gcd(n,p-1)=1}} \sum_{ 0\leq m\leq p-1} \psi \left((\tau ^n-u)m\right) \right )\nonumber \\[.3cm]
&=& \frac{c_p}{p} \sum _{x \leq u\leq x+y,}  \sum_{\substack{1\leq n\leq p-1\\\gcd(n,p-1)=1}} 1+\frac{1}{p}\sum _{x \leq u\leq x+y,} 
\sum_{\substack{1\leq n\leq p-1\\\gcd(n,p-1)=1}} \sum_{ 0<m\leq p-1} \psi \left((\tau ^n-u)m\right)\nonumber\\[.3cm]
&=&M(x,y) + E(x,y)\nonumber,
\end{eqnarray} 
where $c_p \geq 0$ is a local correction constant depending on the fixed prime $p\geq 2$. The main term $M(x,y)$ is determined by a finite sum over the trivial additive character \(\psi =1\), and the error term $E(x,y)$ is determined by a finite sum over the nontrivial additive characters \(\psi(t) =e^{i 2\pi t  /p}\neq 1\).\\

An application of \hyperlink{lem999.76}{Lemma} \ref{lem999.76} to the main term, and an an application of \hyperlink{lem899.06}{Lemma} \ref{lem899.06} to the error term yield
\begin{eqnarray} \label{el887.60}
\sum _{x \leq u\leq y} \Psi (u)
&=&M(x,y) + E(x,y)  \nonumber\\
&\gg& \left ( \frac{1}{e^{\gamma} \log \log p}+ O\left (e^{-c_0 \sqrt{ \log \log p}}\right )\right ) (x+y-x)+O\left(\frac{y-x}{p^{\varepsilon}} \right ) \nonumber \\[.3cm]
&\gg& \frac{y-x}{\log \log p}\left (1+ O\left ( \frac{e^{\gamma} \log \log p}{e^{c_0 \sqrt{ \log \log p}}}\right )\right )   \nonumber \\[.3cm]
&>&0,
\end{eqnarray} 
where the implied constant $d_p=e^{-\gamma}a_pc_p \geq 0$ depends on local information and the fixed prime $p\geq 2$. However, a short interval $[x,x+y]$ of length $x+y-x=N \gg (\log p)^{1+\varepsilon} > 0$ contradicts the hypothesis  (\ref{el887.40}) 
for all sufficiently large primes $p \geq 2$. Ergo, the short interval $ [M, M+N  ]$ contains a primitive root for any sufficiently large prime $p \geq 2$ and the fixed parameters $ M \geq 2$ and $\varepsilon >0$. 
\end{proof}

\section{Least Prime Primitive Roots --- Theorem 1.2} \label{s888}
A modified version of the previous result demonstrate the existence of prime primitive roots in an interval $\left [2,x \right ]$ for any sufficiently large prime $p \geq 2$. 
The analysis below indicates that the local minima of the ratio $\varphi(p-1)/p$ at the highly composite totients $p-1$, and the number of primes $\sum_{p \leq x}\Lambda(n)$ 
are the primary factors determining the size of the interval $\left [2,x \right ]$.

\begin{proof}[\textbf{Proof}] (\hyperlink{thm1.2}{Theorem} \ref{thm1.2}) Suppose that the interval $[2,x]$, with $1 \leq x <p$, does not contain a prime primitive root modulo a large primes \(p\geq 2\), and consider the sum of the weighted characteristic function over the integers $u \leq x$, that is, 
\begin{equation} \label{el887.80}
0=\sum _{ u\leq x} \Psi (u) \Lambda(u).
\end{equation}\\
Replacing the characteristic function, \hyperlink{lem333.3}{Lemma} \ref{lem333.3}, and expanding the nonexistence equation (\ref{el887.40}) yield
\begin{eqnarray} \label{el887.55}
0&=&\sum _{ u\leq x} \Psi (u) \Lambda(u) \\[.3cm]
&=&\sum _{u\leq x}  \left (\frac{1}{p}\sum_{\substack{1\leq n\leq p-1\\\gcd(n,p-1)=1}} \sum_{ 0\leq m\leq p-1} \psi \left((\tau ^n-u)m\right) \right ) \Lambda(u)\nonumber\\[.3cm]
&=& \frac{c_p}{p} \sum _{ u\leq x} \Lambda(u)\sum_{\substack{1\leq n\leq p-1\\\gcd(n,p-1)=1}} 1+\frac{1}{p}\sum _{ u\leq x} \Lambda(u)
\sum_{\gcd(n,p-1)=1,} \sum_{ 0<m\leq p-1} \psi \left((\tau ^n-u)m\right)\nonumber\\[.3cm]
&=&M(x,\Lambda) + E(x,\Lambda),\nonumber
\end{eqnarray} 
where $c_p \geq 0$ is a local correction constant depending on the fixed prime $p\geq 2$. The main term $M(x,\Lambda)$ is determined by a finite sum over the trivial additive character \(\psi =1\), and the error term $E(x,\Lambda)$ is determined by a finite sum over the nontrivial additive characters \(\psi(t) =e^{i 2\pi t  /p}\neq 1\).\\

An application of \hyperlink{lem999.86}{Lemma} \ref{lem999.86} to the main term, and an application of \hyperlink{lem899.16}{Lemma} \ref{lem899.16} to the error term yield
\begin{eqnarray} \label{el887.65}
\sum _{ u\leq y} \Psi (u)\Lambda(u)
&=&M(x,\Lambda) + E(x,\Lambda) \nonumber\\
&\gg&  \frac{x}{\log \log p}\left (1+O\left((\log \log p) e^{-c_0 \sqrt{ \log \log p}} \right ) \right )  +O\left(\frac{x}{p^{\varepsilon}} \right ) \nonumber \\[.3cm]
&\gg&  \frac{x}{\log \log p}\left (1+ O\left ( \frac{e^{\gamma} \log \log p}{e^{c_0 \sqrt{ \log \log p}}}\right )\right )   \nonumber \\[.3cm]
&>&0 ,
\end{eqnarray} 
where the implied constant $d_p=e^{-\gamma}a_pc_p \geq 0$ depends on local information and the fixed prime $p\geq 2$. But, an interval $[2,x]$ of length $x-2 \gg (\log p)^{1+\varepsilon} > 0$ contradicts the hypothesis  (\ref{el887.80}) 
for all sufficiently large primes $p \geq 2$. Ergo, the short interval $\left [2,x \right ]$ contains a prime primitive root for any sufficiently large prime $p \geq 2$ and a fixed parameter $\varepsilon >0$. 
\end{proof}

\section{Prime Primitive Roots in Short Intervals --- Theorem 1.3} \label{s1088}
The prime number theorem in short intervals $\sum _{M \leq n\leq M+N} \Lambda(n) \gg N$, see \cite{BP01}. A modified version of the previous result will prove the existence of prime primitive roots in short interval $\left [M,M+N \right ]$ for any sufficiently large prime $p \geq 2$, $N \gg p^{.535}$ and any $M<p$. The analysis below indicates that the number of primes $\sum_{M\leq p \leq M+N}\Lambda(n)$ in a short interval $\left [M,M+N \right ]$ is the primary factor determining the size of the interval $N$. The local minima of the ratio $\varphi(p-1)/p$ at the highly composite totients $p-1$ have a minor impact on the analysis.

\begin{proof}[\textbf{Proof}] (\hyperlink{thm1.3}{Theorem} \ref{thm1.3}) Suppose that the interval $[2,x]$, with $1 \leq x <p$, does not contain a prime primitive root modulo a large primes \(p\geq 2\), and consider the sum of the weighted characteristic function over the integers $u \leq x$, that is, 
\begin{equation} \label{el1088.80}
0=\sum _{ M\leq u\leq M+N} \Psi (u) \Lambda(u).
\end{equation}\\
Replacing the characteristic function, \hyperlink{lem333.3}{Lemma} \ref{lem333.3}, and expanding the nonexistence equation (\ref{el887.40}) yield\\
\begin{eqnarray} \label{el1088.55}
0&=&\sum _{ M\leq u\leq M+N} \Psi (u) \Lambda(u) \\[.3cm]
&=&\sum _{ M\leq u\leq M+N}  \left (\frac{1}{p}\sum_{\substack{1\leq n\leq p-1\\\gcd(n,p-1)=1}} \sum_{ 0\leq m\leq p-1} \psi \left((\tau ^n-u)m\right) \right ) \Lambda(u)\nonumber\\[.3cm]
&=&\frac{c_p}{p} \sum _{ M\leq u\leq M+N}\Lambda(u) \sum_{\substack{1\leq n\leq p-1\\\gcd(n,p-1)=1}} 1\nonumber \\[.3cm]
&&\hskip 1 in +\frac{1}{p}\sum _{ M\leq u\leq M+N} \Lambda(u)
\sum_{\substack{1\leq n\leq p-1\\\gcd(n,p-1)=1}} \sum_{ 0<m\leq p-1} \psi \left((\tau ^n-u)m\right)\nonumber\\[.3cm]
&=&M(N,\Lambda) + E(N,\Lambda)\nonumber,
\end{eqnarray} 
where $c_p \geq 0$ is a local correction constant depending on the fixed prime $p\geq 2$. The main term $M(N,\Lambda)$ is determined by a finite sum over the trivial additive character \(\psi =1\), and the error term $E(N,\Lambda)$ is determined by a finite sum over the nontrivial additive characters \(\psi(t) =e^{i 2\pi t  /p}\neq 1\).\\

An application of \hyperlink{lem999.96}{Lemma} \ref{lem999.96} to the main term, and an application of \hyperlink{lem899.16}{Lemma} \ref{lem899.16} to the error term yield
\begin{eqnarray} \label{el1088.65}
\sum _{ M\leq u\leq M+N} \Psi (u)\Lambda(u)
&=&M(N,\Lambda) + E(N,\Lambda) \nonumber\\[.3cm]
&\gg&  \frac{N}{\log \log p}\left (1+O\left((\log \log p) e^{-c_0 \sqrt{ \log \log p}} \right ) \right )  +O\left(\frac{x}{p^{\varepsilon}} \right ) \nonumber \\[.3cm]
&\gg&  \frac{N}{\log \log p}\left (1+ O\left ( \frac{e^{\gamma} \log \log p}{e^{c_0 \sqrt{ \log \log p}}}\right )\right )   \nonumber \\[.3cm]
&>&0 ,
\end{eqnarray} 
where the implied constant $d_p=e^{-\gamma}a_pc_p \geq 0$ depends on local information and the fixed prime $p\geq 2$. But, an interval $[M,M+N]$ of length $N \gg p^{.535} > 0$ contradicts the hypothesis (\ref{el1088.80}) 
for all sufficiently large primes $p \geq 2$. Ergo, the short interval $\left [M,M+N \right ]$ contains a prime primitive root for any sufficiently large prime $p \geq 2$ and a fixed parameter $M\geq 0$. 
\end{proof}

\section{Problems}
\begin{exe} { \normalfont 
Determine an explicit interval $\left [M, M+N \right ]$, where $N \geq c_0(\log \log p)^{1+\varepsilon}$, $c_0>0$ is a constant, and $\varepsilon \leq 2$, such the the interval contains a primitive root for any prime $p\geq p_0$, and $ M \geq 2$.}
\end{exe}

\begin{exe} { \normalfont Let $a_0=\prod_{p >2}\left( 1- 1/p(p-1) \right) = 0.3739558136 \ldots$ be the average probability of a primitive root modulo a prime $p \geq 2$. Determine the length 
$N \geq2$ of the average short interval $\left [M, M+N \right ]$ that contains $N\cdot (0.3739\ldots )^k  (1-0.3739\ldots )^{N-k}\geq k$ primitive roots, where $N \geq (\log \log p)^{1+\varepsilon}\geq k$, $k\geq 1$, and $\varepsilon=1$.}
\end{exe}

\begin{exe} { \normalfont Show that the distribution of primitive root modulo a large Germain prime $p=2^aq+1$ with $q \geq 2$ prime, and $a \geq 1$, has a normal approximation with 
mean $\mu\approx 2^{a-1}q(1-1/q)$ and standard deviation $\sigma \approx \sqrt{2^{a-2}q(1-1/q^2)}$.}
\end{exe}

 \begin{exe} { \normalfont Estimate the number of highly composite totients $p-1$ in a short interval, that is, $$\sum_{\substack{x \leq p \leq x+y\\
\omega(p-1)\gg \log p/\log \log p}}1, $$ where $x \geq 1$ is a large number, and $1 <y<x$.
}
\end{exe}

\end{document}